\providecommand{\U}[1]{\protect\rule{.1in}{.1in}}
\newtheorem{thm}{Theorem}[section]
\newtheorem{lem}[thm]{Lemma}
\newtheorem{cor}[thm]{Corollary}
\newtheorem{prop}[thm]{Proposition}
\newtheorem{defi}[thm]{Definition}
\newtheorem{rem}[thm]{Remark}
\renewcommand{\epsilon}{\varepsilon}
\renewcommand{\i}{\mathsf{i}}
\renewcommand{\Re}{\operatorname{Re}}
\renewcommand{\i}{\mathrm{i}}
\begin{document}

\title{Generalized Post-Widder inversion formula with application to statistics}
\author{Denis Belomestny$^{1}$, Hilmar Mai$^{2}$, John Schoenmakers$^{3}$}
\maketitle

\begin{abstract}
In this work we derive an inversion formula for the Laplace transform of a density observed on
a curve in the complex domain, which generalizes the well known Post-Widder
formula. We establish convergence of our inversion method and derive the
corresponding convergence rates for the case of a Laplace transform of a smooth
density. As an application we consider the problem of statistical inference
for variance-mean mixture models. We construct a nonparametric estimator for the mixing density based on the generalized Post-Widder formula, derive bounds for its root mean square error and give a brief numerical example.
\\[0.2cm]\emph{Keywords:} Laplace transform,
inversion formula, Post-Widder formula, variance-mean mixtures, density estimation.

\end{abstract}

\footnotetext[1]{Duisburg-Essen University, Essen and IITP RAS, Moscow, Russia
\texttt{{denis.belomestny@uni-due.de}}}

\footnotetext[2]{CREST ENSAE ParisTech, Paris, \texttt{hilmar.mai@ensae.fr}, (corresponding author)}

\footnotetext[3]{Weierstrass Institute, Berlin,
\texttt{{schoenma@wias-berlin.de}}}

\section{Introduction}

Let $p$ be a probability density on $\mathbb{R}_{+}$, then the integral%
\begin{equation}
\mathcal{L}(z):=\int_{0}^{\infty}e^{-zx}p(x)\,dx,\text{ \ \ }\operatorname{Re}%
z>0, \label{LT}%
\end{equation}
exists and is called the Laplace transform of $p.$ The Laplace transform is a
popular tool for solving differential equations and convolution integral
equations. Its inversion is of importance in many problems from e.g.  physics, engineering and finance (c.f. \cite{bellman1966} and \cite{lavrentev1986} for various examples). 

In general, the complexity of the inversion problem for
$\mathcal{L}$ depends on the information available about the Laplace transform.
If the Laplace transform is explicitly given on its half-plane of convergence,
the density $p$ can be reconstructed using the so-called Bromvich contour
integral (see, e.g. \cite{widder1946})
\[
p(x)=\frac{1}{2\pi{\i}}\int_{c-{\i}\infty}^{c+{\i}\infty}e^{zx}%
\mathcal{L}(z)\,dz,\text{ \ \ }x>0.
\]
In the real case, i.e. in the situation where the Laplace transform of $p$ is known on the real axis only, the inversion of $\mathcal{L}$ is a well-known ill-posed problem (see for
example \cite{kryzhniy2003}, \cite{kryzhniy2006} and references therein). One popular solution for
this case is given by the well known Post-Widder formula which reads as
follows (cf. \cite{widder1946}):
\[
p(x)=\lim_{N\rightarrow\infty}\frac{(-1)^{N}}{N!}\left(  \frac{N}{x}\right)
^{N+1}\mathcal{L}^{(N)}\left(  \frac{N}{x}\right)  .
\]
In some situations, the Laplace transform $\mathcal{L}$ can only be computed
on some curve $\ell$ in $\mathbb{C}$, which is different from $\mathbb{R}_{+}$
or $\{\Re(z)=c\}$ for some $c>0.$ In this paper we generalize the Post-Widder
formula to the case of rather general curves $\ell$ and derive the convergence rates
of the resulting estimator. 

As an application of our results we consider the problem of estimating the mixing density in a variance mean mixture model (see e.g. \cite{barndorff1982normal} and \cite{bingham2002semi}). After constructing the estimator we derive bounds for its root mean square error (RMSE) and demonstrate its performance in a short numerical example. An advantage of using the generalized Post-Widder formula here is that the resulting estimator can be evaluated without any numerical integration.

The paper is organized as follows. In Section \ref{sec:pw} we introduce the generalized Post-Widder inversion formula and discuss its convergence behavior. Section \ref{sec:Estimator-and-convergence} is devoted to the statistical inference for variance mean mixtures together with some numerical results. Finally, the proofs of our results are given in Section \ref{sec:proofs} to \ref{sec:rates}.

\section{Generalized Post-Widder Laplace inversion}

\label{sec:pw}

In this section we will introduce a generalized Post-Widder inversion formula
that extends the classical result by Post and Widder \cite{widder1946} to the
situation when the Laplace transform of a continuous density on $[0,\infty]$
is given on a curve in the complex plane. Subsequently, we prove a convergence
result and derive the rates of convergence for the resulting inverse Laplace transform.

\subsection{Inversion formula and its kernel representation}
Let $p$ be a continuous probability density on $[0,\infty)$ and let its
Laplace transform $\mathcal{L}(z)$ be given on a curve:
\begin{equation}
\ell:=\left\{  z=y+{\i}c(y): \, y\in\mathbb{R}_{+}\right\}  , \label{cu}%
\end{equation}
such that $c$ is piecewise smooth with $c(y)=o(y)$ as $y\rightarrow\infty$. In
this setting the generalized Post-Widder formula can be described as follows.

\begin{defi}
[Generalized Post-Widder formula]For any fixed $x>0,$ we introduce the
generalized Post-Widder formula by
\begin{equation}
p_{N}(x):=\frac{(-1)^{N}}{N!}\left(  g\left(  \frac{N}{x}\right)  \right)
^{N+1}\mathcal{L}^{(N)}\left(  g\left(  \frac{N}{x}\right)  \right)  ,
\label{pN}%
\end{equation}
where $\mathcal{L}^{(N)}$ denotes the $N$th-derivative of the Laplace
transform $\mathcal{L}$ and $g(y):=y+\i c(y).$ For fixed $x>0,$ we define the
generalized Post-Widder kernel via
\begin{equation}
K_{N}(t,x):=\frac{\left(  N+{\i}x c\left(  \frac{N}{x}\right)  \right)
^{N+1}}{N!}t^{N}e^{-\left(  N+{\i}c\left(  \frac{N}{x}\right)  x\right)
t},\text{ \ \ }t>0. \label{genPW}%
\end{equation}

\end{defi}

Our first result deals with the convergence of $p_{N}$ to $p$ as $N \to\infty
$. Such a convergence follows from the properties of the generalized
Post-Widder kernel $K_{N}$ and a representation formula for $p_{N}$ in terms
of $K_{N}$ and $p$. The latter representation is given by the following proposition.

\begin{prop}
\label{pNK} It holds
\[
p_{N}(x)=\int_{0}^{\infty}p(tx)K_{N}(t,x)\,dt.
\]

\end{prop}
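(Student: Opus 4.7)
The plan is direct: use the analyticity of $\mathcal{L}$ in the open right half-plane together with differentiation under the integral sign, then perform a change of variables to match the kernel $K_N$.

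First I would note that since $p$ is a probability density on $[0,\infty)$, the defining integral \eqref{LT} converges absolutely whenever $\Re(z)>0$, and $\mathcal{L}$ is holomorphic on that half-plane. For fixed $x>0$ and $N\geq 1$ the point $g(N/x)=N/x+\i c(N/x)$ lies in $\{\Re z>0\}$, and by standard estimates $u^N e^{-zu}p(u)$ is dominated by an $L^1$ function uniformly for $z$ in a small complex neighborhood of $g(N/x)$. This legitimizes differentiating under the integral $N$ times, yielding
\begin{equation*}
\mathcal{L}^{(N)}\!\bigl(g(N/x)\bigr)=(-1)^N\int_0^\infty u^N e^{-g(N/x)\,u}\,p(u)\,du.
\end{equation*}

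Next I would substitute this expression into the definition \eqref{pN} of $p_N(x)$. The two factors of $(-1)^N$ cancel, leaving
\begin{equation*}
p_N(x)=\frac{g(N/x)^{N+1}}{N!}\int_0^\infty u^N e^{-g(N/x)\,u}\,p(u)\,du.
\end{equation*}
I would then change variables $u=tx$, so that $du=x\,dt$ and $u^N=t^N x^N$, obtaining
\begin{equation*}
p_N(x)=\frac{\bigl(x\,g(N/x)\bigr)^{N+1}}{N!}\int_0^\infty t^N e^{-x\,g(N/x)\,t}\,p(tx)\,dt.
\end{equation*}

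Finally I would use the identity $x\,g(N/x)=x\bigl(N/x+\i c(N/x)\bigr)=N+\i x\,c(N/x)$, so that both the prefactor and the exponent match the definition \eqref{genPW} of $K_N(t,x)$ exactly; this gives the claimed representation. The only nontrivial step is the justification of differentiation under the integral at the complex point $g(N/x)$, but this is routine because $\Re g(N/x)=N/x>0$ and $u\mapsto u^N p(u)$ is integrable against $e^{-\sigma u}$ for any $\sigma\in(0,N/x)$; no genuine obstacle arises.
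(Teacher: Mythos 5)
Your proposal is correct and follows essentially the same route as the paper: differentiate the Laplace transform $N$ times under the integral sign, substitute into the definition of $p_N$, and change variables $u=tx$ so that the prefactor $x\,g(N/x)=N+\i x\,c(N/x)$ reproduces the kernel $K_N$. The only difference is that you explicitly justify the differentiation under the integral, which the paper takes for granted.
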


The following result states that $K_{N}(t,x)$ converges to the delta function
$\delta(t-1)$ on $(0,\infty)$ for any fixed $x>0.$

\begin{prop}
\label{PWK} The following statements hold.

\begin{itemize}
\item[(i)] For $r=0,1,2,$ it holds%
\begin{align}
\int_{0}^{\infty}t^{r}K_{N}(t,x)dt  &  =\frac{\left(  1+r/N\right)  \cdot
\cdot\cdot(1+1/N)}{\left(  1+{\i}(x/N)c(N/x)\right)  ^{r}}\label{mr}\\
&  =1+\frac{r}{N}-{\i}\frac{r\,c(N/x)}{N/x}+O\left(  \frac{r}{N}%
+\frac{r\,c(N/x)}{N/x}\right)  ^{2},\text{ \ \ }N\rightarrow\infty,\text{
\ \ }x>0. \label{mr1}%
\end{align}
Hence, in particular we have%
\begin{equation}
\int_{0}^{\infty}K_{N}(t,x)dt=1\text{ \ for all }N\in \mathbb{N}. \label{norm}%
\end{equation}

\item[(ii)] Let $x>0$ be fixed. For any $\delta\in(0,1),$ there exists a
natural number $N_{\delta}^{x}$ such that
\[
\int_{\{\left\vert t-1\right\vert \geq\delta,\, t\geq0\}}t^{r}\left\vert
K_{N}(t,x)\right\vert dt\leq Ce^{N\left(  \ln\left(  1+\delta\right)
-\delta\right)  /8},\quad N>N_{\delta}^{x}.
\]
for $r=0,1,2,$ and some constant $C$ not depending on $x$ and $\delta.$
\end{itemize}
\end{prop}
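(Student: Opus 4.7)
The plan for (i) is a direct computation. I set $\alpha := N + \i x\,c(N/x)$, so that $K_N(t,x) = \alpha^{N+1} t^N e^{-\alpha t}/N!$ and $\Re\alpha = N > 0$. The Gamma-function identity $\int_0^\infty t^m e^{-\alpha t}\,dt = m!/\alpha^{m+1}$, valid for real $\alpha > 0$, extends to any complex $\alpha$ with positive real part by Cauchy's theorem (or by analytic continuation in $\alpha$, since both sides are holomorphic on the right half-plane). Applied with $m = N+r$ this immediately yields
\begin{equation*}
\int_0^\infty t^r K_N(t,x)\,dt = \frac{(N+1)(N+2)\cdots(N+r)}{\alpha^r},
\end{equation*}
and dividing numerator and denominator by $N^r$ gives \eqref{mr}. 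Setting $r=0$ recovers the normalization \eqref{norm}, and a Taylor expansion of the finite product $(1+1/N)\cdots(1+r/N)$ together with $(1+\i\beta)^{-r}$ around $0$, where $\beta := c(N/x)/(N/x)$, produces \eqref{mr1}.

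For part (ii), the key pointwise bound comes from $|e^{-\alpha t}| = e^{-Nt}$, $|\alpha| = N(1+\beta^2)^{1/2}$, and Stirling's lower bound $N! \geq \sqrt{2\pi N}(N/e)^N$, giving
\begin{equation*}
|K_N(t,x)| \leq \sqrt{N/(2\pi)}\,(1+\beta^2)^{(N+1)/2}\,(te^{1-t})^N.
\end{equation*}
The unimodal function $h(t) := te^{1-t}$ attains its maximum $h(1)=1$ at $t=1$; an elementary check that $h(1-\delta) < h(1+\delta)$ (equivalent to $(1-\delta)e^{2\delta} < 1+\delta$ for $\delta\in(0,1)$) shows $\sup_{|t-1|\geq\delta}h(t) = h(1+\delta) = (1+\delta)e^{-\delta}$, so $h(t)^N \leq e^{N(\ln(1+\delta)-\delta)}$ pointwise on the region of integration. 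To obtain the corresponding integral bound, I split $\{|t-1|\geq\delta\}$ into $(0,1-\delta]$, where a direct monotonicity estimate $\int_0^{1-\delta} t^r h(t)^N\,dt \leq ((1-\delta)^{r+1}/(r+1))\,h(1-\delta)^N$ suffices, and $[1+\delta,\infty)$, where I would apply a Chernoff-type convexity bound to $\phi(t) := Nt - (N+r)\ln t$: using $\phi'(1+\delta) = (N\delta-r)/(1+\delta) > 0$ for $N > r/\delta$, one gets $\int_{1+\delta}^\infty e^{-\phi(t)}\,dt \leq e^{-\phi(1+\delta)}/\phi'(1+\delta)$, producing a polynomial-in-$N$ factor times $h(1+\delta)^N$.

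The remaining step, and where I expect the main technical work to sit, is controlling the factor $(1+\beta^2)^{(N+1)/2}$. Since the hypothesis is only $c(y) = o(y)$, this factor can itself be exponentially large in $N$ when $c$ grows faster than $\sqrt{y}$ (e.g.\ $c(y) \sim y^{2/3}$ gives $N\beta^2 \sim N^{1/3}$), so it cannot be bounded by a constant. The crucial observation is that for each fixed $x>0$ we have $\beta = c(N/x)/(N/x) \to 0$ as $N \to \infty$, so one may choose $N_\delta^x$ large enough to force $(N+1)\beta^2/2 \leq (7/8)\,N\,|\ln(1+\delta)-\delta|$; this is exactly the slack encoded in the factor $1/8$ in the stated exponent, and it yields
\begin{equation*}
(1+\beta^2)^{(N+1)/2}\,e^{N(\ln(1+\delta)-\delta)} \leq e^{N(\ln(1+\delta)-\delta)/8}.
\end{equation*}
The polynomial-in-$N$ prefactors coming from Stirling and the tail estimate grow at most like a power of $N$, and so can be dominated by an arbitrarily small additional fraction of the exponential decay (at the cost of further enlarging $N_\delta^x$), leaving an absolute constant $C$ independent of $x$ and $\delta$ and completing the proof.
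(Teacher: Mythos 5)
Your proposal is correct and follows essentially the same route as the paper: part (i) by extending the Gamma integral $\int_0^\infty t^{N+r}e^{-\alpha t}\,dt=(N+r)!/\alpha^{N+r+1}$ to complex $\alpha$ with positive real part via Cauchy's theorem, and part (ii) by combining Stirling's bound, the decay of $\left(te^{1-t}\right)^N$ away from $t=1$ with supremum $e^{N(\ln(1+\delta)-\delta)}$ attained at $t=1+\delta$, and the hypothesis $c(y)=o(y)$ to absorb the modulus factor coming from $\left\vert N+\i xc(N/x)\right\vert^{N+1}$ (which, as you rightly stress, need not be bounded) into a fraction of that exponential decay by enlarging $N_\delta^x$. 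The only implementation difference is in the tail integral over $[1+\delta,\infty)$, which you control by a Chernoff/convexity estimate while the paper splits the exponent $N(\ln t-t+1)$ into one half supplying the $e^{N(\ln(1+\delta)-\delta)/4}$ decay and one half keeping the integral finite; both devices work and lead to the same conclusion.
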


\subsection{Convergence analysis}

By combining Proposition~\ref{pNK} and Proposition~\ref{PWK}, the point-wise
convergence of $p_{N}$ to $p$ follows for $N \to\infty$ as stated in the
following corollary.

\begin{cor}
For any fixed $x\geq0$ and any continuous density $p$ on $[0,\infty),$ we
have
\begin{equation}
\lim_{N\rightarrow\infty}p_{N}(x)=p(x). \label{lim}%
\end{equation}

\end{cor}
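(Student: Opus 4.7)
The corollary should follow as a standard approximate-identity argument from the two preceding propositions: Proposition~\ref{pNK} identifies $p_N$ as an integral against the kernel $K_N$, while Proposition~\ref{PWK} says $K_N(\cdot, x)$ concentrates at $t=1$. The plan is to use the normalization \eqref{norm} to write
\[
p_N(x) - p(x) = \int_0^\infty [p(tx) - p(x)]\, K_N(t,x)\, dt
\]
and show the right-hand side tends to $0$ by splitting the integration domain into $A := \{|t-1|<\delta\}$ and $B := \{|t-1|\geq \delta,\, t\geq 0\}$ for small $\delta>0$.

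On $A$, I would use continuity of $p$ at $x$ to choose $\delta$ so that $|p(tx)-p(x)|<\epsilon$, reducing the estimate to $\epsilon \int_0^\infty |K_N(t,x)|\,dt$. A direct computation, using $|e^{-\i c(N/x)xt}|=1$ and $\int_0^\infty t^N e^{-Nt}\,dt = N!/N^{N+1}$, yields
\[
\int_0^\infty |K_N(t,x)|\,dt = \left(1 + \frac{x^2 c(N/x)^2}{N^2}\right)^{(N+1)/2},
\]
which remains bounded in $N$ under the regularity of $c$, so the contribution from $A$ is $O(\epsilon)$.

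On $B$, decompose the integral as $\int_B p(tx)K_N\,dt - p(x)\int_B K_N\,dt$; the second summand vanishes by Proposition~\ref{PWK}(ii) with $r=0$. For the first summand, I would split $B$ into $[0,1-\delta]$ and $[1+\delta,\infty)$. On $[0,1-\delta]$, $p(tx)$ is bounded by its supremum over the compact set $[0,x(1-\delta)]$, and Proposition~\ref{PWK}(ii) gives exponential decay of $\int_{[0,1-\delta]}|K_N|\,dt$. On $[1+\delta,\infty)$ the key observation is that $|K_N(t,x)|$ is proportional to $t^N e^{-Nt}$, which is strictly decreasing in $t$ for $t>1$, so $|K_N(t,x)|\leq |K_N(1+\delta,x)|$ there. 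Combined with $\int_0^\infty p(tx)\,dt = 1/x$, this gives
\[
\Bigl|\int_{1+\delta}^\infty p(tx)K_N(t,x)\,dt\Bigr| \leq \frac{|K_N(1+\delta,x)|}{x},
\]
and Stirling together with $(1+\delta)e^{-\delta}<1$ forces $|K_N(1+\delta,x)|\to 0$. Letting $\epsilon\to 0$ then yields \eqref{lim}.

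The main obstacle is the tail $[1+\delta,\infty)$: a continuous density need not be bounded on the full half-line, and Proposition~\ref{PWK}(ii) controls only $\int t^r|K_N|\,dt$ for $r\leq 2$, which a priori does not dominate an arbitrary $p$. Exploiting the monotonicity of $|K_N(\cdot,x)|$ past $t=1$ together with the trivial $L^1$-bound $\int_0^\infty p = 1$ is what closes this gap without any additional tail assumption on $p$.
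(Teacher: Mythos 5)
Your argument is correct and is essentially the approximate-identity argument the paper itself relies on: the paper gives no separate proof of this corollary, saying only that it follows by combining Propositions~\ref{pNK} and \ref{PWK}, and your write-up fills in exactly those details, with the treatment of the tail $[1+\delta,\infty)$ via monotonicity of $t^{N}e^{-Nt}$ together with $\int_{0}^{\infty}p(tx)\,dt=1/x$ being a welcome refinement, since a continuous density need not be bounded and Proposition~\ref{PWK}(ii) alone would not cover that case. The one point to make explicit is that the boundedness of $\int_{0}^{\infty}\left\vert K_{N}(t,x)\right\vert dt=\bigl(1+x^{2}c^{2}(N/x)/N^{2}\bigr)^{(N+1)/2}$ requires $c^{2}(y)/y$ to remain bounded, i.e.\ condition \eqref{gamma}, and not merely $c(y)=o(y)$; the paper only states \eqref{gamma} as a standing assumption immediately after the corollary, but it is tacitly needed here as well, so your phrase ``under the regularity of $c$'' should be replaced by an explicit appeal to \eqref{gamma}.
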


We may now sharpen the statement (\ref{lim}) under additional smoothness
assumptions on the density $p.$ In the following propositions we give explicit
convergence rates for $p_{N}$ as $N\rightarrow\infty$. It turns out that the
rates crucially depend on the growth behavior of the function $c(y)$ as
$y\rightarrow\infty$. We henceforth assume that
\begin{equation}
\gamma:=\underset{y\rightarrow\infty}{\lim\sup}\left[  \frac{c^{2}(y)}%
{y}\right]  <\infty.\label{gamma}%
\end{equation}
The notation $f(x,N)=O_{x}(r(x,N))$ for fixed $x\in\mathbb{R}$ and
$N\rightarrow\infty$ means in the sequel the usual $O$-notation where the
actual order coefficient may depend on $x.$ We start with a local Lipschitz
condition on $p$.

\begin{prop}
\label{prop:rates1} \label{propc} Let $p$ be a locally Lipschitz continuous
density on $[0,\infty)$ with Laplace transform (\ref{LT}) given on the curve
(\ref{cu}). It then holds\
\[
p_{N}(x)=p(x)+\mathcal{R}_{N}(x),
\]
where
\[
\mathcal{R}_{N}(x)=O_x(N^{-1/2})
\]
for $N\rightarrow\infty$ and each $x>0$.
\end{prop}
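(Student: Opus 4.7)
The plan is to combine the representation from Proposition~\ref{pNK} with the normalization~(\ref{norm}) to write
\[
p_N(x) - p(x) = \int_0^\infty \bigl(p(tx) - p(x)\bigr) K_N(t,x)\,dt,
\]
and then to split the integration domain at $|t-1| = \delta$ for some fixed $\delta \in (0,1)$. On the near region $\{|t-1|<\delta\}$ the local Lipschitz hypothesis will supply a factor $|t-1|$ in the integrand, which Cauchy--Schwarz will convert into $\sqrt{\int (t-1)^2 |K_N|\,dt}$; the growth assumption~(\ref{gamma}) will make this $O_x(N^{-1/2})$. On the far region $\{|t-1|\ge\delta\}$, the exponential kernel bound from Proposition~\ref{PWK}(ii) will dominate everything.

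The first step is a uniform pointwise kernel bound. Since $|e^{-(N+\i x c(N/x))t}| = e^{-Nt}$, one has
\[
|K_N(t,x)| = \frac{\bigl(N^2 + x^2 c^2(N/x)\bigr)^{(N+1)/2}}{N!}\, t^N e^{-Nt},
\]
and under~(\ref{gamma}) the ratio $\alpha_N := x^2 c^2(N/x)/N^2$ is $O_x(1/N)$, so $(1+\alpha_N)^{(N+1)/2}$ stays bounded in $N$. Hence $|K_N(t,x)| \le C_x \tilde K_N(t)$, where $\tilde K_N(t) := N^{N+1} t^N e^{-Nt}/N!$ is the classical Post--Widder kernel, i.e.\ the density of $Y/N$ for $Y\sim\mathrm{Gamma}(N+1,1)$, with mean $1+1/N$ and variance $(N+1)/N^2$. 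In particular $\int_0^\infty (t-1)^2 |K_N(t,x)|\,dt \le C_x'/N$. Choosing $\delta \in (0,1)$ small enough that $p$ is $L_x$-Lipschitz on $[(1-\delta)x,(1+\delta)x]$ gives $|p(tx) - p(x)| \le L_x\, x\, |t-1|$ for $|t-1| < \delta$, and Cauchy--Schwarz then delivers the near-region bound $O_x(N^{-1/2})$.

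For the far region, I would use $|p(tx)-p(x)| \le p(x) + p(tx)$ and treat the two parts separately. The $p(x)$-part is $p(x)\int_{|t-1|\ge\delta}|K_N|\,dt$, which is $O(e^{-cN})$ directly by Proposition~\ref{PWK}(ii). For the $p(tx)$-part, I would split once more at $t = 1+T$ for a large fixed $T$: on the bounded annulus $\{\delta\le|t-1|\le T\}$ the continuous density $p$ is bounded on the compact image $[(1-\delta)x,(1+T)x]$, and Proposition~\ref{PWK}(ii) again gives an $O(e^{-cN})$ contribution; on the deep tail $\{t\ge 1+T\}$, choosing $T$ so that $\ln t - t + 1 \le -ct$ on $[1+T,\infty)$ yields $\tilde K_N(t) \le C\sqrt{N}\,e^{-cNt}$ via Stirling, and combining this with $\int_0^\infty p(tx)\,dt = 1/x$ produces another exponentially small bound.

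The main obstacle is recognizing that assumption~(\ref{gamma}) is exactly what is needed to keep the factor $(1+\alpha_N)^{(N+1)/2}$ bounded, so that the complex-curve kernel modulus $|K_N|$ is controlled uniformly by the real Post--Widder kernel; once this comparison is in place, both the $O(1/N)$ second-moment bound (near region) and the exponential tail bounds (far region) reduce to standard Gamma-density computations, and the proof is essentially bookkeeping. A secondary subtlety is that $p$ is not assumed globally bounded, which is why the far region must be split into a bounded annulus plus a deep tail rather than treated in one shot.
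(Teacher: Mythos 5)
Your proof is correct, and its skeleton coincides with the paper's: write $p_N(x)-p(x)=\int_0^\infty(p(tx)-p(x))K_N(t,x)\,dt$ via Proposition~\ref{pNK} and (\ref{norm}), split at $|t-1|=\delta$, use the local Lipschitz bound near $t=1$ and the exponential kernel decay of Proposition~\ref{PWK}(ii) away from it. Where you genuinely diverge is in how the key moment estimate is produced. The paper proves a separate Lemma~\ref{ass1}, which computes $\int_0^\infty|t-1|^r|K_N(t,x)|\,dt\lesssim N^{-r/2}\exp[O(c^2(N/x)x^2/N)]$ by a Laplace-type analysis of $\ln|K_N|$ combined with Stirling's formula, and then applies it with $r=1$ directly (no Cauchy--Schwarz). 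You instead observe the exact identity $|K_N(t,x)|=(1+\alpha_N)^{(N+1)/2}\tilde K_N(t)$ with $\alpha_N=x^2c^2(N/x)/N^2$, note that (\ref{gamma}) forces $\alpha_N=O_x(1/N)$ so the prefactor is bounded, and reduce everything to the known mean and variance of the Gamma$(N+1,1)/N$ density; Cauchy--Schwarz then converts the second moment $O(1/N)$ into the first absolute moment $O(N^{-1/2})$. This is more elementary and makes transparent that (\ref{gamma}) is exactly the condition under which the complex-curve kernel is dominated by the classical real Post--Widder kernel --- the same role played by the factor $\exp[O(c^2(N/x)x^2/N)]$ in Lemma~\ref{ass1}. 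A second point in your favour: on the far region the paper simply invokes boundedness of $p$ to write the $K_1e^{N(\ln(1+\delta)-\delta)/8}$ term, whereas a locally Lipschitz density on $[0,\infty)$ need not be globally bounded; your extra split into a compact annulus (where continuity gives a bound) and a deep tail (where $\tilde K_N(t)\le C\sqrt{N}e^{-cNt}$ is integrated against $\int_0^\infty p(tx)\,dt=1/x$) closes that small gap cleanly. The trade-off is that Lemma~\ref{ass1} is reused by the paper for the higher-order rates in Propositions~\ref{propc1} and~\ref{propc2}, so the paper's route amortizes the Laplace-method work, while yours is self-contained for this proposition.
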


When the density $p$ is differentiable, the rates of Proposition
\ref{prop:rates1} can be improved as the following result shows.

\begin{prop}
\label{propc1} Let $p$ be a differentiable density on $[0,\infty)$ with
Laplace transform (\ref{LT}) given on the curve (\ref{cu}). We then have for
\thinspace$0\leq\gamma<\infty$,%
\[
\operatorname{Re}\left[  p_{N}(x)\right]  =p(x)+\mathcal{R}_{N}(x),
\]
where
\[
\mathcal{R}_{N}(x)=o(N^{-1/2})
\]
for $N\rightarrow\infty$ and each $x>0$. Further we have that%
\begin{align*}
p_{N}(x)  & =p(x)+\mathcal{O}_{x}(N^{-1/2})\text{ \ \ for \ \thinspace
}0<\gamma<\infty,\text{ \ \ and}\\
p_{N}(x)  & =p(x)+o(N^{-1/2})\text{ \ \ for \ \ }\gamma=0.
\end{align*}

\end{prop}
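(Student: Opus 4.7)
My plan is to start from the kernel representation in Proposition~\ref{pNK} and, using the normalization (\ref{norm}), rewrite
\[
p_{N}(x)-p(x)=\int_{0}^{\infty}\bigl(p(tx)-p(x)\bigr)K_{N}(t,x)\,dt.
\]
Since $p$ is differentiable at $x$, I decompose $p(tx)-p(x)=p'(x)x(t-1)+x(t-1)\rho(t)$ with $\rho(t)\to0$ as $t\to1$, so that $p_{N}(x)-p(x)=L_{N}(x)+R_{N}(x)$, where $L_{N}(x):=p'(x)x\int_{0}^{\infty}(t-1)K_{N}(t,x)\,dt$ is a linear piece and $R_{N}(x):=x\int_{0}^{\infty}(t-1)\rho(t)K_{N}(t,x)\,dt$ is the Taylor remainder.

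For the linear piece, formula (\ref{mr1}) with $r=1$ gives
\[
\int_{0}^{\infty}(t-1)K_{N}(t,x)\,dt=\frac{1}{N}-\i\,\frac{c(N/x)}{N/x}+O_{x}(N^{-1}),
\]
where the $O_{x}(N^{-1})$ absorbs $(c(N/x)/(N/x))^{2}$ using (\ref{gamma}), i.e. $c^{2}(y)/y$ is eventually bounded. Since $c(N/x)/(N/x)=xc(N/x)/N$ and $c(y)=O(\sqrt{y})$ under $0<\gamma<\infty$ but $c(y)=o(\sqrt{y})$ when $\gamma=0$, the quantity $c(N/x)/(N/x)$ is $O_{x}(N^{-1/2})$ in the former case and $o(N^{-1/2})$ in the latter. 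Consequently $\Re[L_{N}(x)]=p'(x)x/N+O_{x}(N^{-1})=o(N^{-1/2})$ unconditionally, whereas $\Im[L_{N}(x)]$ is $O_{x}(N^{-1/2})$ when $\gamma>0$ and $o(N^{-1/2})$ when $\gamma=0$. These three cases match the statement exactly, provided $R_{N}(x)=o(N^{-1/2})$.

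To control the remainder I fix $\delta\in(0,1)$ and split the integral at $|t-1|=\delta$. From (\ref{genPW}), $|K_{N}(t,x)|=|N+\i xc(N/x)|^{N+1}/N!\cdot t^{N}e^{-Nt}$, and under (\ref{gamma}) the $N$-dependent prefactor satisfies $|N+\i xc(N/x)|^{N+1}\le C_{x}N^{N+1}$, so $|K_{N}(t,x)|\le C_{x}\tilde K_{N}(t)$ with $\tilde K_{N}(t):=N^{N+1}/N!\cdot t^{N}e^{-Nt}$ the classical Post--Widder kernel. A direct Gamma-function computation yields $\int_{0}^{\infty}\tilde K_{N}=1$ and $\int_{0}^{\infty}(t-1)^{2}\tilde K_{N}(t)\,dt=1/N+2/N^{2}$, so Cauchy--Schwarz gives
\[
|R_{N}^{\mathrm{near}}(x)|\le x\sup_{|t-1|<\delta}|\rho(t)|\cdot C_{x}\,(N^{-1}+2N^{-2})^{1/2}=\sup_{|t-1|<\delta}|\rho(t)|\cdot O_{x}(N^{-1/2}).
\]
Choosing $\delta$ so that $\sup_{|t-1|<\delta}|\rho(t)|$ is arbitrarily small and then letting $N\to\infty$ yields $R_{N}^{\mathrm{near}}(x)=o(N^{-1/2})$. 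For the far contribution, bound $|\rho(t)(t-1)|\le(p(tx)+p(x))/x+|p'(x)|\,|t-1|$; polynomial moments of $|K_{N}|$ over $|t-1|\ge\delta$ are exponentially small by Proposition~\ref{PWK}(ii), while $\int_{|t-1|\ge\delta}p(tx)|K_{N}(t,x)|\,dt$ is controlled by combining $\int_{0}^{\infty}p(tx)\,dt=1/x$ with the uniform pointwise estimate $\sup_{|t-1|\ge\delta}|K_{N}(t,x)|\le C_{x}'\sqrt{N}\exp(N[\ln(1+\delta)-\delta]/2)$, which follows from the unimodality of $t^{N}e^{-Nt}$ at $t=1$ and Stirling's formula. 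Thus $R_{N}^{\mathrm{far}}(x)=O(e^{-cN})$.

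The main obstacle is precisely this last step: because a merely differentiable density on $[0,\infty)$ need not be globally bounded, one cannot factor $\|p\|_{\infty}$ out of the far-field integral, and the argument has to trade on the $L^{1}$-mass of $p$ together with a uniform pointwise decay of $|K_{N}|$ outside the concentration window. With $R_{N}(x)=o(N^{-1/2})$ in hand, combining the real/imaginary splits of $L_{N}(x)$ with the $\gamma$-dependent estimates above delivers all three claims of the proposition.
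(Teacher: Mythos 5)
Your proof is correct and follows the same overall architecture as the paper's: decompose $p(tx)-p(x)$ using differentiability at $x$, handle the linear term via the first-moment asymptotics (\ref{mr1}), make the Taylor remainder $o(N^{-1/2})$ by an $\epsilon$--$\delta$ argument, and kill the far field $\{|t-1|\ge\delta\}$ with exponential tail bounds. Two of your technical choices differ from the paper and are worth noting. First, where the paper invokes its Lemma~\ref{ass1} to get $\int|t-1|\,|K_N|\,dt=O_x(N^{-1/2})$, you dominate $|K_N(t,x)|$ by a constant multiple of the classical Post--Widder kernel $\tilde K_N(t)=N^{N+1}t^Ne^{-Nt}/N!$ (using (\ref{gamma}) to bound the prefactor $|N+\i xc(N/x)|^{N+1}\le C_xN^{N+1}$) and compute its second central moment $1/N+2/N^2$ exactly; this is cleaner and avoids the Laplace-type integral estimates in Lemma~\ref{ass1}. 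Second, on the far field the paper simply writes ``since $p$ is bounded'' and applies Proposition~\ref{PWK}(ii) — but a differentiable density on $[0,\infty)$ need not be bounded, so this step is, strictly speaking, a small gap in the paper's argument. Your replacement, trading the $L^1$-mass $\int_0^\infty p(tx)\,dt=1/x$ against the uniform pointwise bound $\sup_{|t-1|\ge\delta}|K_N(t,x)|\le C_x\sqrt{N}\,e^{N(\ln(1+\delta)-\delta)}$, closes this gap and makes the proposition hold under exactly the stated hypotheses. The case analysis in $\gamma$ via (\ref{reee}) at the end is identical to the paper's.
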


We conclude this section by considering the Laplace inversion problem for a
differentiable density $p$ with locally Lipschitz derivative. It turns out
that we can achieve the error term \(\mathcal{R}_{N}\) of the order $N^{-1}$ in this case.

\begin{prop}
\label{propc2} Let $p$ be a smooth density on $[0,\infty)$ such that its
derivative $p^{\prime}$ is locally Lipschitz, and its Laplace transform
(\ref{LT}) is given on the curve (\ref{cu}). Then for \thinspace$0\leq
\gamma<\infty$,
\[
\operatorname{Re}\left[  p_{N}(x)\right]  =p(x)+\mathcal{R}_{N}(x),
\]
where
\[
\mathcal{R}_{N}(x)=O(N^{-1})
\]
for $N\rightarrow\infty$ and each $x>0$. Moreover we have that%
\begin{align*}
p_{N}(x)  & =p(x)+\mathcal{O}_{x}(N^{-1/2})\text{ \ \ for \ \thinspace
}0<\gamma<\infty,\text{ \ \ and}\\
p_{N}(x)  & =p(x)+O_{x}(N^{-1})\text{ \ \ for \ \ }\gamma=0.
\end{align*}

\end{prop}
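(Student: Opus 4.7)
The plan is to refine the argument used for Propositions~\ref{prop:rates1} and~\ref{propc1} by exploiting a second-order Taylor expansion, which is now permissible because $p'$ is locally Lipschitz. Combining Proposition~\ref{pNK} with the normalization (\ref{norm}) one obtains
\[
p_{N}(x) - p(x) = \int_{0}^{\infty} \bigl(p(tx) - p(x)\bigr) K_{N}(t,x) \, dt,
\]
and on a window $\{|t-1| < \delta\}$ the local Lipschitz continuity of $p'$ yields
\[
p(tx) - p(x) = x p'(x)(t-1) + R(t,x), \qquad |R(t,x)| \leq C_{x}(t-1)^{2}.
\]

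First I would dispose of the tail $\{|t-1|\geq\delta\}$. By Proposition~\ref{PWK}(ii) the integrals $\int_{|t-1|\geq\delta} t^{r}|K_{N}(t,x)|\,dt$ decay exponentially in $N$ for $r = 0,1,2$; on the far region $\{t>2\}$ an additional argument dominating $|K_{N}|$ by the classical Post-Widder kernel (which has exponential tails there) combined with the integrability of $p$ controls $\int p(tx)|K_{N}|\,dt$. Hence the tail contribution is negligible compared to $N^{-1}$.

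Next is the linear term. By the moment identity (\ref{mr}),
\[
x p'(x)\int_{0}^{\infty}(t-1) K_{N}(t,x)\,dt = x p'(x)\cdot\frac{1/N - \i \alpha_{N}}{1 + \i \alpha_{N}}, \qquad \alpha_{N} := \frac{x}{N}\, c\!\left(\frac{N}{x}\right) \in \mathbb{R}.
\]
Rationalizing, the real part equals $(1/N - \alpha_{N}^{2})/(1+\alpha_{N}^{2})$; since (\ref{gamma}) implies $\alpha_{N}^{2} = (x^{2}/N)\cdot c^{2}(N/x)/N = O_{x}(N^{-1})$ whenever $\gamma<\infty$, this is $O_{x}(N^{-1})$ in all cases. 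The imaginary part equals $-\alpha_{N}(1 + 1/N)/(1+\alpha_{N}^{2}) = O_{x}(\alpha_{N})$, which is of order $N^{-1/2}$ when $\gamma>0$ and of strictly smaller order when $\gamma = 0$.

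The main obstacle is the quadratic remainder $\int_{|t-1|<\delta} R(t,x) K_{N}(t,x)\,dt$, which must be bounded by $C_{x}\int(t-1)^{2}|K_{N}(t,x)|\,dt$, i.e.\ by an estimate on the \emph{modulus} of the complex kernel, not already captured by (\ref{mr}). I would resolve this via the factorization
\[
|K_{N}(t,x)| = \left(1 + \frac{x^{2}c^{2}(N/x)}{N^{2}}\right)^{(N+1)/2}\tilde{K}_{N}(t),
\]
where $\tilde{K}_{N}(t) := N^{N+1}t^{N}e^{-Nt}/N!$ is the classical Post-Widder kernel. Under (\ref{gamma}) the prefactor is uniformly bounded in $N$, and a standard Stirling computation yields $\int_{0}^{\infty}(t-1)^{2}\tilde{K}_{N}(t)\,dt = O(N^{-1})$. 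Assembling the real-part bound for the linear term, the $O_{x}(N^{-1})$ bound for the quadratic remainder, and the exponentially small tails produces $\operatorname{Re}[p_{N}(x)] = p(x) + O(N^{-1})$; the full complex bounds in each regime of $\gamma$ follow immediately from the imaginary-part estimate above.
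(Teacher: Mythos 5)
Your proposal is correct and follows the same skeleton as the paper's proof: a second-order Taylor expansion of $p(tx)$ around $t=1$ (valid since $p'$ is locally Lipschitz), an exponentially small tail via Proposition~\ref{PWK}(ii), the linear term evaluated through the exact moment identity (\ref{mr}), and an $O_x(N^{-1})$ bound on the quadratic remainder $\int(t-1)^2|K_N(t,x)|\,dt$, followed by taking real parts. Where you genuinely diverge is in the one nontrivial estimate, the quadratic remainder. The paper obtains it from Lemma~\ref{ass1}, which is proved by Stirling's formula and Laplace-method estimates of $\exp[N(\ln t - t +1)]$ separately on $[1-\delta,1]$ and $[1,1+\delta]$ plus the tail bound. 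You instead observe the exact identity
\[
|K_N(t,x)| \;=\; \Bigl(1+\alpha_N^2\Bigr)^{(N+1)/2}\,\frac{N^{N+1}}{N!}\,t^N e^{-Nt},
\qquad \alpha_N=\frac{x}{N}\,c\!\left(\frac{N}{x}\right),
\]
which is immediate from $|N+\i x c(N/x)|^{N+1}=(N^2+x^2c^2(N/x))^{(N+1)/2}$, and note that $(1+\alpha_N^2)^{(N+1)/2}\le \exp\bigl(\tfrac{N+1}{2}\alpha_N^2\bigr)$ is uniformly bounded under (\ref{gamma}). The second centered moment of the classical kernel is then computable in closed form as $(N+2)/N^2$ (no Stirling is actually needed), so your route is more elementary and entirely bypasses Lemma~\ref{ass1}; the paper's Lemma~\ref{ass1} buys more (all moments $r\ge 1$ with explicit constants, used also in Propositions~\ref{prop:rates1} and \ref{propc1}), but for this proposition your factorization is cleaner and sharper. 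One caveat you inherit from the paper rather than introduce: for $\gamma=0$ the imaginary part of the linear term is only $o(N^{-1/2})$, which — exactly as in the paper's own appeal to (\ref{reee}) — does not by itself yield the stated $O_x(N^{-1})$ for the full complex $p_N$ without a stronger assumption on $c$; this does not affect the main claim about $\operatorname{Re}[p_N]$, which your real-part computation establishes correctly.
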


In the next section we discuss some applications of the generalized
Post-Widder formula \eqref{pN}.

\section{Application to statistical inference for variance-mean mixtures} \label{sec:Estimator-and-convergence}

The problem of inverting a Laplace transform that is given on a curve $\mathcal{\ell}$ in the complex domain appears naturally in the context of statistical inference for variance-mean mixture models. In this section we apply our generalized Post-Widder Laplace inversion formula to estimate the mixture density in a variance mean mixture model. 

We start the construction of the estimator from the empirical characteristic function that can be written as the Laplace transform of the mixture density evaluated on a certain curve in the complex plain. By inverting this Laplace transform we obtain a nonparametric estimator for the mixing density $p$. Then we derive bounds for the RMSE and conclude by a numerical example.

\subsection{Variance-mean mixture models}

A normal variance-mean mixture model is defined as
\[
q(x):=\int_{0}^{\infty}\frac{1}{\sigma\sqrt{s}}\,\nu\left(  \frac{x-s\mu
}{\sigma\sqrt{s}}\right)  \,p(s)\,ds,
\]
where $\mu\in\mathbb{R},$ $\sigma\in\mathbb{R}_{+}$, $\nu$ is the density of a
standard normal distribution and $p$ is a mixing density on $\mathbb{R}_{+}.$
Variance-mean mixture models play an important role in both theory and
practice of statistics. In particular, such mixtures appear as limit
distributions in asymptotic theory for dependent random variables and they are
useful for modeling data stemming from heavy-tailed and skewed distributions,
see, e.g. \cite{barndorff1982normal} and \cite{bingham2002semi}. 

As can be
easily seen, the variance-mean mixture distribution $q$ coincides with the
distribution of the random variable $\sigma\,\sqrt{\xi}\,X+\mu\,\xi,$ where
$X$ is standard normal and $\xi$ is a nonnegative random variable with the
density $p,$ which is independent of $X.$ The class of variance-mean mixture
models is rather large. For example, the class of the normal variance mixture
distributions ($\mu=0$) can be described as follows: $q$ is the density of a
normal variance mixture if and only if $\mathcal{F}[q](\sqrt{u})$ is a
completely monotone function in $u.$ 

\subsection{Estimating the mixing density}

Here we consider the problem of
statistical inference for the mixing density $p$ based on a sample
$X_{1},\ldots,X_{n}$ from the distribution $q.$ The Fourier transform of the
density $q$ is given by
\begin{equation}
\Phi(u):=\mathcal{F}[q](u)=\int_{0}^{\infty}e^{-s\psi(u)}p(s)\,ds=\mathcal{L}%
[p](\psi(u)) \label{Fourier}%
\end{equation}
with $\psi(u):=-{\i}u\mu+u^{2}\sigma^{2}/2$ and from our data we can
directly estimate the Fourier transform of $q,$ e.g. by means of the so-called
empirical Fourier transform:
\begin{equation}
\Phi_{n}(u):=\frac{1}{n}\sum_{k=1}^{n}e^{\i uX_{k}}. \label{emp_phi}%
\end{equation}
Then we end up with the problem of reconstructing the density $p$ from its
empirical Laplace transform observed on the curve
\[
\ell:=\left\{  z=\operatorname{Re}\psi(u)+{\i}\operatorname{Im}%
\psi(u):\,u\in\mathbb{R}_{+}\right\}  ,
\]
where we have $\operatorname{Re}[\psi(u)]=u^{2}\sigma^{2}/2$ and
$\operatorname{Im}[\psi(u)]=-u\mu.$ Note that
\[
\ell=\left\{  z=y+c(y):\,y\in\mathbb{R}_{+}\right\}  \text{ with }%
c(y)=-\mu\sqrt{2y}/\sigma.
\]
If $\sigma\neq0$ than the function $c$ is smooth and satisfies $c(y)=o(y)$ as
$y\rightarrow\infty.$ Moreover it holds
\begin{equation}
\gamma=\underset{y\rightarrow\infty}{\lim\sup}\left[  \frac{c^{2}(y)}%
{y}\right]  =2\mu^{2}/\sigma^{2}.
\end{equation}
Hence if $p$ is a differentiable density on $[0,\infty)$ such that $p^{\prime
}$ is locally Lipschitz, we can apply Proposition~\ref{propc2} to get the following
asymptotic bound
\begin{equation}
p_{N}(x)-p(x)=%
\begin{cases}
O_{x}(1/N), & \mu=0,\\
O_{x}(1/\sqrt{N}), & \mu\neq 0,
\end{cases}
\label{bias}%
\end{equation}
for $p_{N}$ defined in \eqref{pN} with $g(y)=y-{\i}\mu\sqrt{2y}/\sigma.$ Due
to (\ref{Fourier}), we have $\mathcal{L}(z)=\Phi(\xi(z)),$ where $\xi$ is the
inverse of $\psi$ on $\ell.$ Without loss of generality we may assume that
$\sigma=1,$ then $\xi(z)=\sqrt{2z-\mu^{2}}+{\i}\mu$ using the principal
branch of the square root. So, for $l\geq1$ we have that
\[
\xi^{(l)}(z)=\left(  -1\right)  ^{l-1}\frac{\left(  2\left(  l-1\right)
\right)  !}{2^{l-1}\left(  l-1\right)  !}\left(  2z-\mu^{2}\right)  ^{\frac
{1}{2}-l},
\]
and by Faa di Bruno's formula it follows that for $z\in\ell,$
\begin{align}
\mathcal{L}^{(N)}(z)  &  =\sum_{\substack{k_{1},\ldots,k_{N}\geq
0,\\k_{1}+2k_{2}+\ldots+Nk_{N}=N\\k_{1}+k_{2}+\ldots+k_{N}=k}}\frac{N!}%
{k_{1}!\ldots k_{N}!(1!)^{k_{1}}\ldots(N!)^{k_{N}}}\Phi^{(k)}(\xi(z))%
{\displaystyle\prod\limits_{l=1}^{N}}
\left(  \xi^{(l)}(z)\right)  ^{k_{l}}\nonumber\\
&  =\sum_{k=1}^{N}\Phi^{(k)}(\xi(z))\left(  -1\right)  ^{N-k}\left(
2z-\mu^{2}\right)  ^{\frac{1}{2}k-N}F_{N,k}. \label{Faa}%
\end{align}
The coefficients $F_{N,k}$ can be expressed as follows%
\begin{align*}
F_{N,k}  &  :=\sum_{\substack{k_{1},\ldots,k_{N}\geq0,\\k_{1}+2k_{2}%
+\ldots+Nk_{N}=N\\k_{1}+k_{2}+\ldots+k_{N}=k}}\frac{N!}{k_{1}!\ldots k_{N}!}%
{\displaystyle\prod\limits_{l=1}^{N}}
\left(  \frac{\left(  2\left(  l-1\right)  \right)  !}{2^{l-1}\left(
l-1\right)  !l!}\right)  ^{k_{l}}\\
&  =\sum_{\substack{k_{1},\ldots,k_{N}\geq0,\\k_{1}+2k_{2}+\ldots+\left(
N-k+1\right)  k_{N-k+1}=N\\k_{1}+k_{2}+\ldots+k_{N-k+1}=k,}}\frac{N!}%
{k_{1}!\cdot\cdot\cdot k_{N-k+1}!}%
{\displaystyle\prod\limits_{l=1}^{N-k+1}}
\left(  \frac{\frac{\left(  2\left(  l-1\right)  \right)  !}{2^{l-1}\left(
l-1\right)  !}}{l!}\right)  ^{k_{l}}\\
&  =B_{N,k}\left(  1,...,\frac{\left(  2\left(  N-k\right)  \right)
!}{2^{N-k}\left(  N-k\right)  !}\right),
\end{align*}
where $B_{N,k}$ stand for the partial Bell polynomials. In view of
\eqref{emp_phi} and (\ref{Faa}), we now introduce%
\[
\mathcal{L}_{n}^{(N)}(z):=\sum_{k=1}^{N}\Phi_{n}^{(k)}(\xi(z))\left(
-1\right)  ^{N-k}\left(  2z-\mu^{2}\right)  ^{\frac{1}{2}k-N}F_{N,k}%
\]
as an unbiased estimator for $\mathcal{L}^{(N)}(z)$ at every $z\in\ell.$
We so arrive at an empirical estimate for the mixing density $p$: 
\begin{align}
\nonumber
p_{n,N}(x)  &  :=\frac{(-1)^{N}}{N!}\left(  g(N/x)\right)  ^{N+1}%
\mathcal{L}_{n}^{(N)}\left(  g(N/x)\right) \\
\nonumber
&  =\frac{(-1)^{N}}{N!}\left(  g(N/x)\right)  ^{N+1}\frac{1}{n}\sum_{j=1}%
^{n}e^{{\i}\xi(g(N/x))X_{j}}\\
\label{pnN}
&  \times\sum_{k=1}^{N}({\i}X_{j})^{k}\left(  -1\right)  ^{N-k}\left(
2g(N/x)-\mu^{2}\right)  ^{\frac{1}{2}k-N}F_{N,k},
\end{align}
which obviously satisfies $\mathbb{E}\left[  p_{n,N}(x)\right]  =p_{N}(x).$ The
coefficients $F_{N,k}$ can be computed by evaluating the partial Bell
polynomials $B_{N,k}$ that are available in most computational algebra
packages. Hence, we obtain an explicit estimator for $p$ that circumvents the
use of numerical integration procedures as needed in other Laplace inversion techniques.

\subsection{Convergence of the estimator}
Let us now analyze the variance of $p_{n,N}.$ 
\begin{thm}
\label{thm:var}
For some constant $C>1,$ depending on $x>0,$ it holds that
\begin{equation}
\operatorname{Var}\left[  p_{n,N}(x)\right]  \lesssim\frac{C^{N}}{n}\sum_{k=1}%
^{N}N^{-k}\beta_{2k}, \label{var}%
\end{equation}
where $\beta_{2k}:=\mathbb{E}\left[  \left\vert X_{1}\right\vert ^{2k}\right]
$.
\end{thm}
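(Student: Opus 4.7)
The plan is to bound the variance by $\tfrac{1}{n}\E|h_{N,x}(X_1)|^2$, where $h_{N,x}(X)$ denotes the summand of the empirical estimator \eqref{pnN}, and then to show that $\E|h_{N,x}(X_1)|^2 \lesssim C^N \sum_{k=1}^N N^{-k}\beta_{2k}$. The i.i.d.\ structure gives $\operatorname{Var}[p_{n,N}(x)] = \tfrac{1}{n}\operatorname{Var}[h_{N,x}(X_1)] \leq \tfrac{1}{n}\E|h_{N,x}(X_1)|^2$, so it suffices to control the second moment of one summand. A crucial simplification is that in the variance-mean mixture setting $\xi(g(N/x))$ is in fact real: taking $\sigma=1$ (which is WLOG), the identity $2g(N/x)-\mu^2 = (\sqrt{2N/x}-\i\mu)^2$ yields $\xi(g(N/x)) = \sqrt{2N/x}+\i\mu-\i\mu = \sqrt{2N/x}$, so $|e^{\i\xi(g(N/x))X_1}|=1$ and the exponential factor drops out of the modulus.

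Writing $z := g(N/x)$, combining the triangle inequality with Cauchy--Schwarz applied with the weights $N^{k/2}$ and $N^{-k/2}$ yields
\[
|h_{N,x}(X_1)|^2 \leq \frac{|z|^{2(N+1)}}{(N!)^2}\left(\sum_{k=1}^{N} N^k|2z-\mu^2|^{k-2N}F_{N,k}^2\right)\left(\sum_{k=1}^{N}\frac{|X_1|^{2k}}{N^k}\right).
\]
Taking expectations isolates the desired factor $\sum_{k=1}^{N}N^{-k}\beta_{2k}$, and the theorem reduces to the deterministic bound
\[
A_N := \frac{|z|^{2(N+1)}}{(N!)^2}\sum_{k=1}^{N}N^k|2z-\mu^2|^{k-2N}F_{N,k}^2 \leq C^N
\]
for some constant $C=C(x)>1$.

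The control of $A_N$ rests on a generating-function estimate for the nonnegative coefficients $F_{N,k}$. Applying Lagrange inversion to the relation $t = u - u^2/2$ (equivalently, Faa di Bruno applied to $\mathcal{L}=\Phi\circ\xi$) one obtains the exponential generating function
\[
\sum_{N\geq k}\frac{F_{N,k}}{N!}\,t^N = \frac{(1-\sqrt{1-2t})^k}{k!}.
\]
Evaluating at $t=1/2$ and using the nonnegativity of each $F_{N,k}$ gives the clean bound $F_{N,k}/N! \leq 2^N/k!$. Substituting this and recognising $\sum_{k\geq 0}y^k/(k!)^2 = I_0(2\sqrt y) \leq e^{2\sqrt y}$,
\[
A_N \leq \frac{|z|^{2(N+1)}\,4^N}{|2z-\mu^2|^{2N}}\,e^{2\sqrt{N|2z-\mu^2|}}.
\]
With $|z|^2 = (N/x)(N/x+2\mu^2)$ and $|2z-\mu^2| = 2N/x+\mu^2$ one verifies that the ratio $4|z|^2/|2z-\mu^2|^2$ stays bounded as $N\to\infty$ (converging to $1$) while $\sqrt{N|2z-\mu^2|} = O(N)$, so $A_N \leq C^N$. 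The main obstacle is precisely the estimate on $F_{N,k}$: a naive factorial bound is far too crude once it is multiplied by $N^k|2z-\mu^2|^{k-2N}$, and the generating function supplies exactly the $2^N/k!$ decay in $k$ that is needed to collapse the sum into the Bessel series $I_0$ and obtain an exponential-in-$N$ rather than superexponential bound.
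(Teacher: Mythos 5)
Your proof is correct, and it reaches the bound by a genuinely different route in the key combinatorial step. Both arguments share the same skeleton: reduce to $\frac{1}{n}\mathbb{E}\left\vert h_{N,x}(X_{1})\right\vert ^{2}$ (using, as you make explicit and the paper leaves implicit, that $\xi(g(N/x))=\sqrt{2N/x}$ is real, so the exponential factor has modulus one), apply a Cauchy--Schwarz step to separate the moments of $X_{1}$ from the coefficients, and then estimate $F_{N,k}$. The paper uses the unweighted inequality $\vert\sum_{k}a_{k}\vert^{2}\leq N\sum_{k}\vert a_{k}\vert^{2}$, absorbs the factor $N$ into $C^{N}$, and then bounds each term via $x_{l}\leq C^{l}l!$ together with the closed form $B_{N,k}(1!,\ldots,(N-k+1)!)=\binom{N}{k}\binom{N-1}{k-1}(N-k)!$, which gives $F_{N,k}\leq C^{N}N^{N-k}$ and hence $N^{k-2N}F_{N,k}^{2}\leq C^{N}N^{-k}$ termwise. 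You instead put the weights $N^{\pm k/2}$ into Cauchy--Schwarz so that the factor $\sum_{k}N^{-k}\beta_{2k}$ appears immediately, and control the residual deterministic sum $A_{N}$ through the exponential generating function $\sum_{N\geq k}F_{N,k}\,t^{N}/N!=(1-\sqrt{1-2t})^{k}/k!$ evaluated at $t=1/2$; this identity is correct (the inner series is $t\sum_{m\geq0}C_{m}(t/2)^{m}=1-\sqrt{1-2t}$ with $C_{m}$ the Catalan numbers), nonnegativity of the $F_{N,k}$ justifies the boundary evaluation, and the resulting bound $F_{N,k}\leq 2^{N}N!/k!$ together with $\sum_{k}y^{k}/(k!)^{2}\leq e^{2\sqrt{y}}$ and your computations $\vert z\vert^{2}=(N/x)(N/x+2\mu^{2})$ and $\vert 2z-\mu^{2}\vert=2N/x+\mu^{2}$ (both of which check out) close the argument. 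Your version is self-contained in that it avoids the Lah-number evaluation of the Bell polynomial at factorial arguments, at the cost of the generating-function identity. One remark: your closing claim that a naive factorial bound would be ``far too crude'' is not accurate. The paper's estimate $F_{N,k}\leq C^{N}N^{N-k}$ (which also follows from your own bound, since $N!/k!\leq N^{N-k}$) already gives $N^{k}\vert 2z-\mu^{2}\vert^{k-2N}F_{N,k}^{2}\leq C^{N}$ uniformly in $k$, so $A_{N}\leq C_{1}^{N}$ follows without the $1/k!$ decay or the Bessel series; the generating function is elegant but not necessary here.
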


Based on the estimate \eqref{var}, we can derive upper bounds of the root mean
square error (RMSE) for the density estimator $p_{n,N}$.

\begin{thm}
\label{rates} Fix some $x>0$ and suppose that $\mathcal{R}_{N}(x)=p_{N}%
(x)-p(x)=O_{x}(N^{-\rho}).$ We then have the following bounds for the RMSE of
$p_{n,N}.$

\begin{description}
\item[(i)] If $\beta_{2k}\leq A^{k}k^{k}$ for some $A>0$ and all natural
$k>1,$ then
\[
\operatorname{RMSE}(p_{n,N})=O_{x}\left(  \frac{1}{\ln^{\rho}n}\right)  ,
\]
provided
\begin{equation}
N=\frac{\ln n}{\ln(AC)}-\frac{2\rho}{\ln(AC)}\ln\ln n, \label{NC}%
\end{equation}
where the constant $C$ comes from the bound \eqref{var}.

\item[(ii)] In the case $\beta_{2k}\leq A^{k}k^{bk},$ $k\in\mathbb{N}$ for
some $b>1,$ it holds that
\[
\operatorname{RMSE}(p_{n,N})\lesssim\frac{\ln^{\rho}\ln n}{\ln^{\rho}n},
\]
which is achieved by choosing%
\begin{equation}
N=\frac{\left(  2\rho+\ln n\right)  }{\left(  b-1\right)  \ln\left(  \left(
2\rho+\ln n\right)  /\left(  b-1\right)  \right)  }-\frac{2\rho}{b-1}.
\label{NTH}%
\end{equation}

\end{description}
\end{thm}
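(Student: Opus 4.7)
\medskip
\noindent\textbf{Proof plan.} Since $\mathbb{E}[p_{n,N}(x)] = p_N(x)$, the mean square error decomposes as
\[
\mathrm{MSE}(p_{n,N}(x)) \;=\; (p_N(x) - p(x))^2 + \mathrm{Var}(p_{n,N}(x)) \;=\; \mathcal{R}_N(x)^2 + \mathrm{Var}(p_{n,N}(x)),
\]
so by hypothesis and Theorem~\ref{thm:var},
\[
\mathrm{MSE}(p_{n,N}(x)) \;\lesssim\; N^{-2\rho} + \frac{C^N}{n}\sum_{k=1}^{N} N^{-k}\beta_{2k}.
\]
The plan is therefore to estimate the variance sum under each moment growth hypothesis, and then pick $N=N(n)$ to equate the squared bias with the variance.

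\medskip
\noindent\textbf{Case (i).} Since $k\le N$ we have $k^k/N^k \le 1$, hence $N^{-k}\beta_{2k}\le A^k$ and
\[
\sum_{k=1}^N N^{-k}\beta_{2k} \;\le\; \sum_{k=1}^{N}A^k \;\lesssim\; A^N
\]
(after absorbing a polynomial factor in the constant $C$, which I will henceforth rename so that $\mathrm{Var}(p_{n,N}(x))\lesssim (AC)^N/n$). Balancing $(AC)^N/n$ against $N^{-2\rho}$ gives the transcendental equation
\[
N\ln(AC) = \ln n - 2\rho\ln N .
\]
Substituting the ansatz $N\sim\ln n/\ln(AC)$ into the right-hand side yields $\ln N = \ln\ln n - \ln\ln(AC) + o(1)$, and one more iteration produces the explicit choice~\eqref{NC}. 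For this $N$ the variance becomes $\exp(-2\rho\ln\ln n)=(\ln n)^{-2\rho}$, matching $N^{-2\rho}\asymp(\ln n)^{-2\rho}$; thus $\mathrm{RMSE}=O_x(\ln^{-\rho}n)$.

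\medskip
\noindent\textbf{Case (ii).} Here $k\le N$ gives $k^{bk}\le N^{(b-1)k}\cdot k^k \le N^{bk}$, so $N^{-k}\beta_{2k}\le A^k N^{(b-1)k}$. Since $b>1$, the last term dominates and
\[
\sum_{k=1}^{N} N^{-k}\beta_{2k} \;\lesssim\; (AN^{b-1})^{N},
\]
whence $\mathrm{Var}(p_{n,N}(x))\lesssim (ACN^{b-1})^N/n$. Equating this with $N^{-2\rho}$ leads to
\[
(b-1)\,N\ln N + N\ln(AC) + 2\rho\ln N \;=\; \ln n .
\]
Setting $u:=(2\rho+\ln n)/(b-1)$, the leading-order equation $(b-1)N\ln N\approx\ln n$ suggests $N+2\rho/(b-1)\approx u/\ln u$, which, upon verifying that the lower-order terms $N\ln(AC)$ and $2\rho\ln N$ are absorbed correctly, produces precisely~\eqref{NTH}. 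Plugging this $N$ back in yields $N\asymp\ln n/((b-1)\ln\ln n)$, so $N^{-\rho}\asymp(\ln\ln n/\ln n)^{\rho}$ and the variance turns out to be of the same order, giving $\mathrm{RMSE}\lesssim \ln^{\rho}\ln n/\ln^{\rho}n$.

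\medskip
\noindent\textbf{Main obstacle.} The first case is elementary once the variance sum is bounded. The technically delicate step is Case~(ii): one must show that the somewhat baroque-looking choice~\eqref{NTH} really solves the balance equation to within the claimed precision. This requires asymptotically expanding $\ln N$ around $u/\ln u$ and verifying that the correction term $-2\rho/(b-1)$ exactly cancels the contribution of the $2\rho\ln N$ summand at the level needed to obtain $\ln^\rho\ln n/\ln^\rho n$ rather than a slower rate. Once this identity is checked, both the bias and the variance are simultaneously of order $(\ln\ln n/\ln n)^{\rho}$, completing the argument.
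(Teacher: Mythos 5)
Your overall strategy is exactly the paper's: the bias--variance decomposition using $\mathbb{E}[p_{n,N}(x)]=p_N(x)$, the bound $\sum_k N^{-k}\beta_{2k}\le\sum_k A^k\lesssim A^N$ in case (i) (the paper assumes WLOG $A>1$ and uses the geometric series, so no polynomial factor even needs absorbing), and the bound $\sum_k N^{-k}\beta_{2k}\lesssim N^{(b-1)N}\cdot C^N$ in case (ii). Case (i) of your argument is complete: substituting \eqref{NC} gives $(AC)^N=n\ln^{-2\rho}n$ and $N^{-2\rho}\asymp\ln^{-2\rho}n$ directly.

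In case (ii), however, what you have written is a plan rather than a proof, and the step you yourself flag as the ``main obstacle'' is precisely the content of the paper's argument there, so it cannot be left as ``once this identity is checked.'' Concretely, you must show that the choice \eqref{NTH} makes the variance bound $C_1^NN^{(b-1)N}/n$ of order $N^{-2\rho}$, i.e.\ that
\[
C_1^N\,N^{(b-1)N+2\rho}\;\lesssim\; n .
\]
The paper does this by taking logarithms: writing $u=(2\rho+\ln n)/(b-1)$ so that $N=u/\ln u-2\rho/(b-1)$, one expands
\[
\ln N^{(b-1)N+2\rho}=(b-1)\left(\tfrac{u}{\ln u}\right)\ln\!\left(\tfrac{u}{\ln u}\left(1-\tfrac{2\rho\ln u}{(b-1)u}\cdot\tfrac{b-1}{2\rho}\cdot\tfrac{2\rho}{b-1}\right)\right)
=\ln n-(2\rho+\ln n)\tfrac{\ln\ln u}{\ln u}-2\rho+2\rho+O\!\left(\tfrac{\ln\ln n}{\ln n}\right),
\]
and observes that the negative term $-(2\rho+\ln n)\ln\ln u/\ln u\to-\infty$ dominates the contribution $\ln C_1^N=O(\ln n/\ln\ln n)$, so that $\ln(C_1^NN^{(b-1)N+2\rho})\le\ln n+o(1)$ eventually, whence $C_1^NN^{(b-1)N+2\rho}\le 2n$. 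This is where the correction term $-2\rho/(b-1)$ in \eqref{NTH} is actually used (it produces the exact $-2\rho$ that cancels against the exponent shift $+2\rho$), and it is also where one sees why only an upper bound $\le 2n$, not an asymptotic equality, is needed. Without this computation your case (ii) asserts the conclusion rather than deriving it; with it, your argument coincides with the paper's.
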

\begin{rem}
Because of the inequality \(\operatorname{Var}\left[  p_{n,N}(x)\right]\geq \operatorname{Var}\left[  \Re[p_{n,N}(x)]\right],\) the results of Theorem~\ref{rates} remain valid with \(p_{n,N}\) replaced by \(\Re[p_{n,N}].\)
\end{rem}

\subsection{Numerical example}

Let the mixing density $p$ be the exponential density, i.e. $p(x)=\exp(-x),$ then
\begin{align*}
\mathbb{E}\left[  \left\vert X_{1}\right\vert ^{2k}\right]  =\frac{2^{k}%
}{\sqrt{\pi}}\Gamma(1+k)\Gamma(k+1/2) \leq A^{k} k^{2k}%
\end{align*}
for some $A>0.$ Combining Proposition~\ref{propc2} with Theorem~\ref{rates},
we obtain
\[
\operatorname{RMSE}(\Re[p_{n,N}])\lesssim\frac{\ln\ln (n)}{\ln(n)}, \quad
n\to\infty.
\]
\begin{figure}[!tp]
\centering
\includegraphics[width=0.7\linewidth]{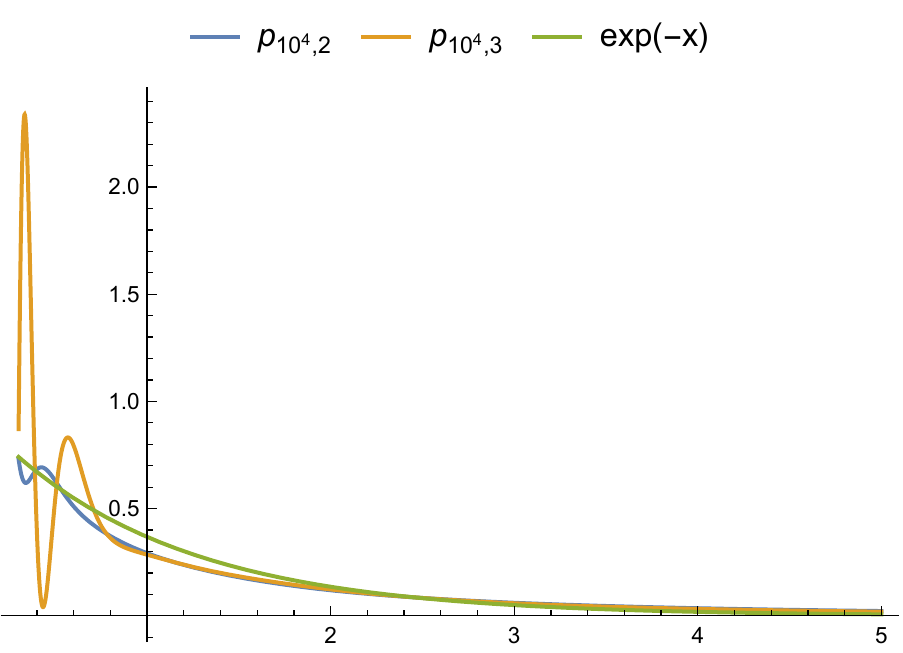}
~\includegraphics[width=0.7\linewidth]{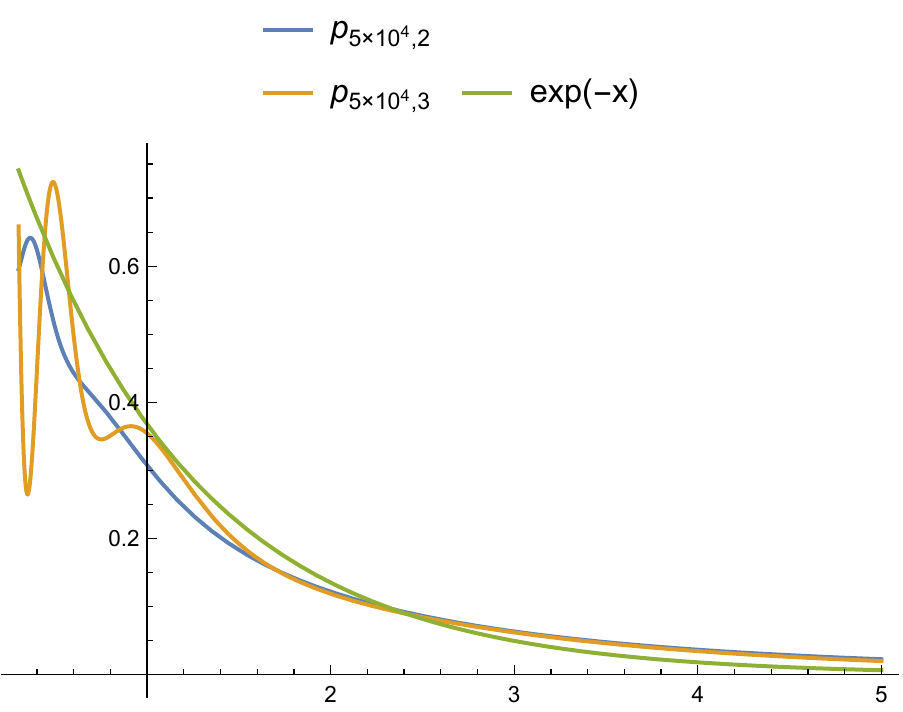}
 \caption{Approximations \(p_{n,N}\) (real parts) for sample sizes \(n=10000\) (above), \(n=50000\) (below)   and different values of \(N\) of the true exponential density \(p(x)=\exp(-x)\) in the normal mean variance mixture model with parameters \(\mu=0.1\) and \(\sigma=1.\)
\label{fig: pnN_estimate}}
\end{figure}
In Figure~\ref{fig: pnN_estimate} one can see the result of numerical estimation of the underlying exponential density \(p(x)=\exp(-x)\) based on different numbers of terms \(N\) in 
\eqref{pnN} and different sample sizes \(n\). As can be observed, the estimation error increases as \(x\to +0.\) This effect can be explained by noting that \(|g(N/x)|\to \infty\) as \(x\to +0\) and so the variance increases for small \(x\) (see the proof of Theorem~\ref{thm:var}).

\section{Acknowledgment}
The first author was supported by the Deutsche
Forschungsgemeinschaft through the SFB 823 ``Statistical modelling of nonlinear dynamic processes'' and by a Russian Scientific Foundation grant (project N 14-50-00150). The second author received founding by the LABEX Louis Bachelier ``Finance et Croissance Durable'' in Paris.

\section{Proofs}

\label{sec:proofs}

In this section we gather the proofs of our results from Section \ref{sec:pw}.

\subsection{Proof of Proposition~\ref{pNK}}

We start by proving the integral representation of $p_{N}$ in terms of $p$ and
the generalized Post-Widder kernel $K_{N}$. We have by definition%
\[
\mathcal{L}(z)=\int_{0}^{\infty}e^{-uz}p(u)du,\quad\operatorname{Re}z>0
\]
differentiating $N$-times results in
\[
\mathcal{L}^{(N)}(z)=\int_{0}^{\infty}(-u)^{N}e^{-uz}p(u)du,
\]
yielding finally
\begin{align*}
p_{N}(x)  &  =\frac{(-1)^{N}}{N!}\left(  \frac{N}{x}+{\i}c\left(  \frac{N}%
{x}\right)  \right)  ^{N+1}\int_{0}^{\infty}(-u)^{N}e^{-u\left(  \frac{N}%
{x}+{\i}c\left(  \frac{N}{x}\right)  \right)  }\, p(u)\, du\\
&  =\frac{1}{N!}\left(  \frac{N}{x}+{\i}c\left(  \frac{N}{x}\right)  \right)
^{N+1}\int_{0}^{\infty}(xt)^{N}e^{-t\left(  N+{\i}xc\left(  \frac{N}%
{x}\right)  \right)  }p(xt)x\, dt\\
&  =\int_{0}^{\infty}\frac{1}{N!}\left(  N+{\i}xc\left(  \frac{N}{x}\right)
\right)  ^{N+1}t^{N}e^{-t\left(  N+{\i}c\left(  \frac{N}{x}\right)  x\right)
}p(xt)\, dt\\
&  =\int_{0}^{\infty}p(tx)K_{N}(t,x)\, dt.
\end{align*}

\subsection{Proof of Proposition~\ref{PWK}}

(i): For $r=0,1,2,...$ we have
\begin{align*}
\int_{0}^{\infty}t^{r}K_{N}(t,x)dt  &  =\int_{0}^{\infty}\frac{1}{N!}\left(
N+{\i}xc\left(  \frac{N}{x}\right)  \right)  ^{N+1}t^{N+r}e^{-t\left(  N+{\i
}xc\left(  \frac{N}{x}\right)  \right)  }dt\\
&  =\frac{1}{N!}\left(  N+{\i}xc\left(  \frac{N}{x}\right)  \right)  ^{-r}%
\int_{0}^{\left(  N+{\i}xc\left(  \frac{N}{x}\right)  \right)  \cdot\infty
}z^{N+r}e^{-z}dz.
\end{align*}
Note that on the set $\bigl\{z:\,\mathrm{arc}(z)=R\,e^{\i\theta},$
$-\pi/2<\theta<\pi/2\bigr\}$ it holds that $\left\vert z^{N+r}e^{-z}%
\right\vert =R^{N+r}e^{-R\,\cos\theta}\rightarrow0$ for $R\rightarrow\infty.$
Thus, by the Cauchy integral theorem,
\[
\int_{0}^{\left(  N+{\i}xc\left(  \frac{N}{x}\right)  \right)  \cdot\infty
}z^{N+r}e^{-z}dz=\int_{0}^{\infty}t^{N+r}e^{-t}dt=\left(  N+r\right)  !
\]
from which (\ref{mr}) follows. Thus (\ref{mr}) holds for any integer integer
$r\geq0.$ The asymptotic expression (\ref{mr1}) for $r=1$ and $r=2$ can be
seen from taking the logarithm of (\ref{mr}):
\begin{align*}
\ln\frac{\left(  1+r/N\right)  \cdot\cdot\cdot(1+1/N)}{\left(  1+{\i
}(x/N)c(N/x)\right)  ^{r}}  &  =\sum_{l=1}^{r}\ln\left(  1+l/N\right)
-r\ln\left(  1+{\i}(x/N)c(N/x)\right) \\
&  =\frac{r}{N}-{\i}\frac{rc(N/x)}{N/x}+O\left(  \frac{1}{N}+\frac
{c(N/x)}{N/x}\right)  ^{2}.
\end{align*}

(ii): By Stirling's formula it follows that%
\begin{align*}
K_{N}(t,x)  &  =\frac{\left(  1+{\i}\left(  \frac{x}{N}\right)  c\left(
\frac{N}{x}\right)  \right)  ^{N+1}N^{N+1}}{\sqrt{2\pi N}\cdot\frac{N^{N}%
}{e^{N}}}t^{N}e^{-Nt}e^{-{\i}(xt)c(\frac{N}{x})}(1+O(1/N))\\
&  =\left(  1+{\i}c\left(  \frac{N}{x}\right)  \frac{x}{N}\right)  ^{N+1}%
\sqrt{\frac{N}{2\pi}}t^{N}e^{-Nt}e^{N}e^{-{\i}(xt) c(\frac{N}{x})}(1+O(1/N))
\end{align*}
and hence%
\begin{align}
\ln K_{N}(t,x)  &  =-{\i}(xt) c\left(  \frac{N}{x}\right)  +(N+1)\ln\left(
1+{\i}\cdot\frac{x}{N}\cdot c\left(  \frac{N}{x}\right)  \right) \nonumber\\
&  +\frac{1}{2}\ln\frac{N}{2\pi}+N\ln t-Nt+N+O(1/N)\nonumber\\
&  =(N+1)\sum_{j=1}^{\infty}\frac{(-1)^{j-1}}{j}\left(  {\i}c\left(  \frac
{N}{x}\right)  \frac{x}{N}\right)  ^{j}+\frac{1}{2}\ln\frac{N}{2\pi
}\nonumber\\
&  +N\ln t-Nt+N-{\i}(xt)c\left(  \frac{N}{x}\right)  +O(1/N)\nonumber\\
&  =\frac{1}{2}\ln\frac{N}{2\pi}+ O\left(  Nc\left(  \frac{N}{x}\right)
\frac{x}{N}\right)  ^{2}+O(1/N)\nonumber\\
&  +N\ln t-Nt+N+{\i}c\left(  \frac{N}{x}\right)  x\left(  1-t\right)  +{\i
}c\left(  \frac{N}{x}\right)  . \label{lnK}%
\end{align}
In particular, for $t\neq1$ we have
\begin{align}
\left\vert K_{N}(t,x)\right\vert  &  =\exp\left[  \frac{1}{2}\ln\frac{N}{2\pi
}+O(1/N)+N\left(  \underset{<0}{\underbrace{\ln t-t+1}}+\underset
{\rightarrow0}{\underbrace{O\left(  c\left(  \frac{N}{x}\right)  \frac{x}%
{N}\right)  ^{2}}}\right)  \right] \nonumber\\
&  \rightarrow0\text{ \ \ \ as }N\rightarrow\infty. \label{KAbs}%
\end{align}
Let us fix $x>0$ and $\delta>0$ arbitrarily. W.l.o.g. we may assume that
$\delta<1.$ Because $c(\frac{N}{x})\frac{x}{N}\rightarrow0$ for $N\rightarrow
\infty,$ there exist a number $N_{\delta}^{x}$ such that for any $N>N_{\delta
}^{x}$ and any $t>0$ with $\left\vert t-1\right\vert \geq\delta,$%
\[
\ln t-t+1+O\left(  c\left(  \frac{N}{x}\right)  \frac{x}{N}\right)  ^{2}%
<\frac{1}{2}\left(  \ln t-t+1\right)  ,
\]
i.e.%
\[
\left\vert K_{N}(t,x)\right\vert \leq\exp\left[  \frac{1}{2}\ln\frac{N}{2\pi
}+O(1/N)+\frac{1}{2}N\left(  \ln t-t+1\right)  \right]  ,
\]
and so for $r=0,1,2,$ $N>N_{\delta}^{x},$%
\begin{equation}
\int_{\{\left\vert t-1\right\vert \geq\delta,\text{ }t\geq0\} }t^{r}\left\vert
K_{N}(t,x)\right\vert dt\leq C_{1}\sqrt{\frac{N}{2\pi}}\int_{\{\left\vert
t-1\right\vert \geq\delta,\text{ } t\geq0\}}t^{r}\left(  te^{-t+1}\right)
^{N/2}dt. \label{dpl}%
\end{equation}
Note that for $\left\vert t-1\right\vert \geq\delta$ we have $te^{-t+1}=e^{\ln
t-t+1}\leq e^{\frac{\ln t-t+1}{2}+\frac{\ln\left(  1+\delta\right)  -\delta
}{2}},$ hence for $r=0,1,2,$ $N>N_{\delta}^{x},$%
\begin{align*}
\int_{\{\left\vert t-1\right\vert \geq\delta,\text{ }t\geq0\}}t^{r}\left(
te^{-t+1}\right)  ^{N/2}  &  \leq e^{N\left(  \ln\left(  1+\delta\right)
-\delta\right)  /4}\int_{\{\left\vert t-1\right\vert \geq\delta,\text{ }%
t\geq0\}}t^{r}e^{N\left(  \ln t-t+1\right)  /4}dt\\
&  \leq e^{N\left(  \ln\left(  1+\delta\right)  -\delta\right)  /4}%
\int_{\{\left\vert t-1\right\vert \geq\delta,\text{ }t\geq0\}}t^{r}e^{\left(
\ln t-t+1\right)  /4}dt\\
&  \leq e^{N\left(  \ln\left(  1+\delta\right)  -\delta\right)  /4}\int
_{0}^{\infty}t^{r+1}e^{\left(  -t+1\right)  /4}dt\leq C_{2}e^{N\left(
\ln\left(  1+\delta\right)  -\delta\right)  /4},
\end{align*}
where $\ln\left(  1+\delta\right)  -\delta<0$ and $C_{2}>0.$ It next follows
from (\ref{dpl}) that for some constant $C>0$%
\begin{equation}
\int_{\{\left\vert t-1\right\vert \geq\delta,\text{ }t\geq0\}}t^{r}\left\vert
K_{N}(t,x)\right\vert dt\leq Ce^{N\left(  \ln\left(  1+\delta\right)
-\delta\right)  /8}\rightarrow0 \label{exs}%
\end{equation}
for $N\rightarrow\infty,$ $N>N_{\delta}^{x}.$

\subsection{Proof of Proposition~\ref{propc}}

In order to derive the convergence rates in (\ref{lim}), we proceed with the
following lemma.

\begin{lem}
\label{ass1} For $r=1,2,..$
\[
\int_{0}^{\infty}\left\vert t-1\right\vert ^{r}\left\vert K_{N}%
(t,x)\right\vert dt\leq C6^{\left(  r+1\right)  /2}\,\frac{\Gamma\left(
\left(  r+1\right)  /2\right)  }{N^{r/2}}\exp\left[  O\left(  c^{2}\left(
\frac{N}{x}\right)  \frac{x^{2}}{N}\right)  \right]  .
\]

\end{lem}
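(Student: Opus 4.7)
The plan is to reuse the Stirling-type expansion of $|K_N(t,x)|$ already obtained in the proof of Proposition~\ref{PWK}(ii). Taking real parts in (\ref{lnK}) gives, for $N$ large enough and all $t>0$,
\[
|K_N(t,x)|\leq C_{1}\sqrt{N}\,\exp\bigl[N(\ln t-t+1)\bigr]\,\exp\bigl[O(c^{2}(N/x)\,x^{2}/N)\bigr].
\]
Since $f(t):=\ln t-t+1$ vanishes quadratically at $t=1$ with $f''(1)=-1$, the factor $e^{Nf(t)}$ is essentially a Gaussian of scale $N^{-1/2}$ concentrated at $1$, which is exactly what will produce the $N^{-r/2}$ gain. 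I would therefore split $\int_0^\infty |t-1|^r |K_N(t,x)|\,dt$ at $|t-1|=1/2$ and treat the two pieces separately.

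On the near region I would use the elementary quadratic bound
\[
\ln t-t+1\leq -\tfrac{1}{6}(t-1)^{2},\qquad t\in[1/2,3/2],
\]
which follows by verifying that $h(t):=f(t)+(t-1)^{2}/6$ satisfies $h(1)=h'(1)=0$ and is concave on $(0,\sqrt 3)$ (as $h''(t)=1/3-1/t^{2}<0$ there), so $h\leq 0$ on $[1/2,3/2]$. Inserting this into the bound on $|K_N|$ and extending the integration to all of $\mathbb{R}$ gives
\[
\int_{|t-1|<1/2}|t-1|^{r}|K_N(t,x)|\,dt\leq C_{1}\sqrt{N}\,e^{O(c^{2}(N/x)x^{2}/N)}\int_{-\infty}^{\infty}|s|^{r}e^{-Ns^{2}/6}\,ds,
\]
and the Gaussian moment identity $\int_{-\infty}^{\infty}|s|^{r}e^{-As^{2}}\,ds=A^{-(r+1)/2}\Gamma((r+1)/2)$ applied with $A=N/6$ produces precisely the factor $6^{(r+1)/2}\Gamma((r+1)/2)\,N^{-r/2}$ claimed in the lemma.

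For the outer region $\{|t-1|\geq 1/2,\,t\geq 0\}$ I would mimic the chain of inequalities used in the proof of Proposition~\ref{PWK}(ii): on this set $te^{-t+1}\leq e^{(\ln t-t+1)/2+(\ln(3/2)-1/2)/2}$, while the global bound $\ln t-t+1\leq 0$ allows one to replace $N/4$ by $1/4$ in the exponent, so the integrand is dominated by a constant multiple of $\sqrt{N}\,e^{-\alpha N}|t-1|^{r}t^{1/4}e^{(1-t)/4}$ with $\alpha=(1/2-\ln(3/2))/8>0$. Since $\int_{0}^{\infty}|t-1|^{r}t^{1/4}e^{(1-t)/4}\,dt$ is finite for every $r\geq 1$, the outer contribution decays exponentially in $N$ and is absorbed into the final constant. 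The main obstacle is really arithmetic bookkeeping: the constant $1/6$ (and hence the factor $6^{(r+1)/2}$) has to be tracked precisely through the concavity argument, and one must check that the polynomial weight $|t-1|^{r}$, now unrestricted in $r$ (versus $r\leq 2$ in Proposition~\ref{PWK}(ii)), does not destroy the exponential decay on the outer region.
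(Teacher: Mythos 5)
Your proposal is correct and follows essentially the same route as the paper: the same Stirling-based pointwise bound on $\left\vert K_{N}(t,x)\right\vert$, the same split into a neighbourhood of $t=1$ and its complement, the same quadratic bound $\ln t-t+1\leq-(t-1)^{2}/6$ feeding a Gaussian-moment computation that yields the factor $6^{(r+1)/2}\Gamma((r+1)/2)N^{-r/2}$, and the same exponentially small tail estimate adapted from the proof of Proposition~\ref{PWK}(ii). The only differences are cosmetic (you fix $\delta=1/2$ and integrate over all of $\mathbb{R}$ rather than treating the two sides of $t=1$ separately).
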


\begin{proof}
Let us fix $x>0.$ For $\delta>0$ with $0<\delta<1$ we have by (\ref{KAbs}),
\begin{align}
\int_{1}^{1+\delta}\left\vert t-1\right\vert ^{r}\left\vert K_{N}%
(t,x)\right\vert dt  &  \leq C\sqrt{\frac{N}{2\pi}}\exp\left[  N\times
O\left(  c\left(  \frac{N}{x}\right)  \frac{x}{N}\right)  ^{2}\right]
\nonumber\\
&  \times\int_{1}^{1+\delta}\left(  t-1\right)  ^{r}\exp\left[  N\left(  \ln
t-t+1\right)  \right]  \,dt, \label{hu1}%
\end{align}
where%
\begin{multline*}
\int_{1}^{1+\delta}\left(  t-1\right)  ^{r}\exp\left[  N\left(  \ln
t-t+1\right)  \right]  \,dt\\
=\int_{0}^{\delta}u^{r}\exp\left[  N\left(  \ln\left(  1+u\right)  -u\right)
\right]  \,du\\
=\frac{6^{\left(  r+1\right)  /2}}{2}\frac{\Gamma\left(  \left(  r+1\right)
/2\right)  }{N^{\left(  r+1\right)  /2}}.
\end{multline*}
Hence by (\ref{hu1}) and Theorem \ref{PWK}-(ii)
\[
\int_{1}^{\infty}\left\vert t-1\right\vert ^{r}\left\vert K_{N}%
(t,x)\right\vert dt\leq C_{1}\frac{6^{\left(  r+1\right)  /2}}{2}\frac
{\Gamma\left(  \left(  r+1\right)  /2\right)  }{N^{r/2}}\exp\left[  O\left(
c^{2}\left(  \frac{N}{x}\right)  \frac{x^{2}}{N}\right)  \right]  .
\]
Similarly,%
\begin{align*}
\int_{1-\delta}^{1}\left\vert t-1\right\vert ^{r}\left\vert K_{N}%
(t,x)\right\vert dt  &  =C\sqrt{\frac{N}{2\pi}}\exp\left[  O\left(
c^{2}\left(  \frac{N}{x}\right)  \frac{x^{2}}{N}\right)  \right] \\
&  \times\int_{1-\delta}^{1}\left(  1-t\right)  ^{r}\exp\left[  N\left(  \ln
t-t+1\right)  \right]  \,dt,
\end{align*}
where%
\begin{align*}
&  \int_{1-\delta}^{1}\left(  1-t\right)  ^{r}dt\exp\left[  N\left(  \ln
t-t+1\right)  \right] \\
&  =\int_{0}^{\delta}u^{r}du\exp\left[  N\left(  \ln\left(  1-u\right)
+u\right)  \right]  \leq\int_{0}^{\delta}u^{r}dt\exp\left[  -Nu^{2}/6\right]
.
\end{align*}
By applying Theorem \ref{PWK}-(ii) once again the statement of the lemma is proved.
\end{proof}

Let us now fix $x>0.$ By the Lipschitz assumption on $p$ we thus have for
fixed $0<\delta<1,$%
\begin{align*}
\left\vert p_{N}(x)-p(x)\right\vert  &  \leq\int\left\vert
p(tx)-p(x)\right\vert \left\vert K_{N}(t,x)\right\vert dt\\
&  \leq K_{x}\left\vert x\right\vert \int_{\left\vert t-1\right\vert <\delta
}\left\vert t-1\right\vert \left\vert K_{N}(t,x)\right\vert dt+K_{1}%
e^{N\left(  \ln\left(  1+\delta\right)  -\delta\right)  /8}\\
&  \leq K_{x}^{1}\left\vert x\right\vert N^{-1/2}\exp\left[  O\left(
c^{2}(\frac{N}{x})\frac{x^{2}}{N}\right)  \right] \\
&  \leq K_{x}^{2}\left\vert x\right\vert N^{-1/2}%
\end{align*}
due to assumption (\ref{gamma}).

\subsection{Proof of Proposition~\ref{propc1}}

Let us fix $x>0.$ By differentiability of $p$ we may find for any $\epsilon>0$
a $\delta_{\epsilon}$ with $0<\delta_{\epsilon}<1$ such that%
\[
p(tx)=:p(x)+(t-1)\left(  xp^{\prime}(x)+E_{t}^{x}\right)  ,
\]
with $\left\vert E_{t}^{x}\right\vert <\epsilon$\ for all $t>0$ with
$\left\vert t-1\right\vert <\delta_{\epsilon}.$Due to Proposition \ref{pNK} we
then have%
\begin{gather*}
p_{N}(x)-p(x)=\int_{0}^{\infty}\left(  p(tx)-p(x)\right)  K_{N}(t,x)dt=\int
_{\left\vert t-1\right\vert \geq\delta}\left(  p(tx)-p(x)\right)
K_{N}(t,x)dt\\
+xp^{\prime}(x)\int_{\left\vert t-1\right\vert <\delta}(t-1)K_{N}%
(t,x)dt+\int_{\left\vert t-1\right\vert <\delta}(t-1)E_{t}^{x}K_{N}%
(t,x)dt=:(\ast)_{1}+(\ast)_{2}+(\ast)_{3}.
\end{gather*}
Since $p$ is bounded we have by Theorem (\ref{PWK})-(ii) that
\begin{equation}
(\ast)_{1}\leq D_{1}e^{N\left(  \ln\left(  1+\delta_{\epsilon}\right)
-\delta_{\epsilon}\right)  /8} \label{c1}%
\end{equation}
for some $D_{1}>0$ and $N>N_{\delta_{\epsilon}}^{x}$ (cf. the proof of Theorem
\ref{PWK})-(ii). By Theorem (\ref{PWK}) we have%
\begin{align}
\int_{\left\vert t-1\right\vert <\delta_{\epsilon}}(t-1)K_{N}(t,x)dt  &
=\int_{0}^{\infty}(t-1)K_{N}(t,x)dt-\int_{\left\vert t-1\right\vert \geq
\delta_{\epsilon},~t\geq0}(t-1)K_{N}(t,x)dt\nonumber\\
&  =\frac{1}{N}-{\i}\frac{c(N/x)}{N/x}+O\left(  \frac{1}{N}+\frac
{c(N/x)}{N/x}\right)  ^{2}+f_{N,\delta_{\epsilon}} \label{c2}%
\end{align}
with $\left\vert f_{N,\delta_{\epsilon}}\right\vert \leq D_{2}e^{N\left(
\ln\left(  1+\delta_{\epsilon}\right)  -\delta_{\epsilon}\right)  /8}$ for
$N>N_{\delta_{\epsilon}}^{x}.$ Next, by Lemma \ref{ass1} and assumption
(\ref{gamma}) we have%
\begin{align}
\left\vert (\ast)_{3}\right\vert  &  \leq\epsilon\int_{\left\vert
t-1\right\vert <\delta}\left\vert (t-1\right\vert \left\vert K_{N}%
(t,x)\right\vert dt\leq D_{3}\frac{\epsilon}{N^{1/2}}\exp\left[  O\left(
c^{2}(\frac{N}{x})\frac{x^{2}}{N}\right)  \right] \label{c3}\\
&  \leq D_{4}\frac{\epsilon}{N^{1/2}}\nonumber
\end{align}
From (\ref{c1})--(\ref{c3}) we gather that%
\begin{equation}
p_{N}(x)=p(x)+xp^{\prime}(x)\left(  \frac{1}{N}-{\i}\frac{c(N/x)}%
{N/x}\right)  +O_{x}(\frac{1}{N}+\frac{c(N/x)}{N/x})^{2}+o(N^{-1/2}).
\label{Ree}%
\end{equation}
Now, since
\begin{equation}
\frac{c(N/x)}{N/x}=O\left(  N^{-1/2}\right)  ,\text{ \ if \ }0<\gamma
<\infty,\text{ \ \ and \ \ }\frac{c(N/x)}{N/x}=o(N^{-1/2})\text{ \ if }%
\gamma=0, \label{reee}%
\end{equation}
the statements follow by taking the real part of (\ref{Ree}).

\subsection{Proof of Proposition~\ref{propc2}}

Let us fix $x>0.$ By the Lipschitz assumption on $p^{\prime}$ we may find a
$\delta$ with $0<\delta<1$ such that%
\[
p(tx)=:p(x)+(t-1)xp^{\prime}(x)+\frac{1}{2}(t-1)^{2}R_{t}^{x},
\]
with $\left\vert R_{t}^{x}\right\vert <R^{x}$\ for some constant $R^{x}>0$ and
for all $t>0$ with $\left\vert t-1\right\vert <\delta.$ Due to Proposition
\ref{pNK} we thus have%
\begin{gather}
p_{N}(x)-p(x)=\int_{0}^{\infty}\left(  p(tx)-p(x)\right)  K_{N}(t,x)dt=\int
_{\left\vert t-1\right\vert \geq\delta}\left(  p(tx)-p(x)\right)
K_{N}(t,x)dt\nonumber\\
+\int_{\left\vert t-1\right\vert <\delta}\left(  (t-1)xp^{\prime}(x)+\frac
{1}{2}(t-1)^{2}R_{t}^{x}\right)  K_{N}(t,x)dt=:(\ast)_{1}+(\ast)_{2}%
.\label{int0}%
\end{gather}
Since $p$ is bounded we have by Theorem \ref{PWK}-(ii) again that $(\ast
)_{1}\leq D_{1}e^{N\left(  \ln\left(  1+\delta\right)  -\delta\right)  /8}$
for some $D_{1}>0$ and $N>N_{\delta}^{x}$ (cf. the proof of Theorem
(\ref{PWK})-(ii)). Now let us consider $(\ast)_{2}.$ From Theorem \ref{PWK} it
follows similar to the proof of Proposition \ref{propc1} that%
\begin{equation}
\int_{\left\vert t-1\right\vert <\delta}(t-1)K_{N}(t,x)dt=\frac{1}{N}-{{\i}
}\frac{c(N/x)}{N/x}+O\left(  \frac{1}{N}+\frac{c(N/x)}{N/x}\right)
^{2}+f_{N,\delta}\label{int1}%
\end{equation}
with $\left\vert f_{N,\delta}\right\vert \leq D_{2}e^{N\left(  \ln\left(
1+\delta\right)  -\delta\right)  /8},$ and by Lemma \ref{ass1} we have that%
\begin{align}
\left\vert \int_{\left\vert t-1\right\vert <\delta}(t-1)^{2}R_{t}^{x}%
K_{N}(t,x)dt\right\vert  &  \leq R^{x}\int_{0}^{\infty}\left\vert
t-1\right\vert ^{2}\left\vert K_{N}(t,x)\right\vert dt\nonumber\\
&  \leq\frac{D_{3}}{N}\exp\left[  O\left(  c^{2}(\frac{N}{x})\frac{x^{2}}%
{N}\right)  \right]  \label{int}%
\end{align}
We thus get by (\ref{int0}), (\ref{int1}), (\ref{int}), and assumption
(\ref{gamma}),%
\[
p_{N}(x)=p(x)+xp^{\prime}(x)\left(  \frac{1}{N}-{\i}\frac{c(N/x)}%
{N/x}\right)  +O_{x}\left(  \frac{1}{N}+\frac{c(N/x)}{N/x}\right)
^{2}+O(N^{-1})
\]
from which the statements follow by taking the real part and taking
(\ref{reee}) into account.
\section{Proof of Proposition~\ref{thm:var}}
For a generic constant $C>1,$ depending on $x$ and changing in this proof from
line to line, we may write
\begin{align*}
\text{Var}\left[  p_{n,N}(x)\right]   &  \lesssim\frac{C^{N}}{n}\frac
{N^{2N+2}}{N^{2N}}\mathbb{E}\left[  \left|  \sum_{k=1}^{N}({\i}X_{1}%
)^{k}\left(  -1\right)  ^{N-k}\left(  2g(N/x)-\mu^{2}\right)  ^{\frac{1}%
{2}k-N}F_{N,k}\right|  ^{2}\right] \\
&  \lesssim\frac{C^{N}}{n}\mathbb{E}\left[  \left|  \sum_{k=1}^{N}({{\i}
}X_{1})^{k}\left(  -1\right)  ^{N-k}\left(  2g(N/x)-\mu^{2}\right)  ^{\frac
{1}{2}k-N}F_{N,k}\right|  ^{2}\right] \\
&  \lesssim\frac{C^{N}}{n}\sum_{k=1}^{N}\mathbb{E}\left[  \left\vert
X_{1}\right\vert ^{2k}\left\vert 2g(N/x)-\mu^{2}\right\vert ^{k-2N}F_{N,k}%
^{2}\right] \\
&  \lesssim\frac{C^{N}}{n}\sum_{k=1}^{N}N^{k-2N}F_{N,k}^{2}\mathbb{E}\left[
\left\vert X_{1}\right\vert ^{2k}\right]  .
\end{align*}
It is not difficult to see that for $l=1,...,N-k+1,$%
\[
\frac{\left(  2\left(  l-1\right)  \right)  !}{2^{l-1}\left(  l-1\right)
!}\leq C^{l}l!
\]
and so from the definition of the Bell polynomials it follows that%
\begin{align*}
F_{N,k}  &  \leq C^{k}B_{N,k}\left(  1!,...,(N-k+1)!\right) \\
&  =C^{k}\left(
\begin{tabular}
[c]{c}%
$N$\\
$k$%
\end{tabular}
\ \ \right)  \left(
\begin{tabular}
[c]{c}%
$N-1$\\
$k-1$%
\end{tabular}
\ \ \right)  \left(  N-k\right)  !\\
&  \leq C^{N}N^{N-k}.
\end{align*}

\section{Proof of Proposition \ref{rates}}\label{sec:rates}
(i): Without loss of generality we may assume that $A>1.$ Since for $k\leq N,$
$N^{-k}\leq k^{-k},$ we get from (\ref{var}),
\begin{equation}
\text{Var}\left[  p_{n,N}(x)\right]  \leq\frac{C^{N}}{n}\sum_{k=1}^{N}%
A^{k}\leq\frac{A}{A-1}\frac{(AC)^{N}}{n}. \label{var1}%
\end{equation}
By substituting $N$ according to (\ref{NC}) into (\ref{var1}) we obtain%
\[
\text{Var}\left[  p_{n,N}(x)\right]  \leq\frac{A}{A-1}\frac{(AC)^{N}}{n}%
=\frac{A}{A-1}\ln^{-2\rho}n,
\]
while for the squared bias we have%
\[
\mathcal{R}_{N}^{2}(x)=O_{x}(N^{-2\rho})=O_{x}\left(  \frac{1}{\ln^{2\rho}%
n}\right)  ,
\]
hence (i) follows.

(ii): In this case (\ref{var}) yields%
\begin{equation}
\text{Var}\left[  p_{n,N}(x)\right]  \leq\frac{C^{N}}{n}\sum_{k=1}^{N}%
A^{k}k^{\left(  b-1\right)  k}\leq\frac{C_{1}^{N}}{n}N^{\left(  b-1\right)  N}
\label{var2}%
\end{equation}
for another constant $C_{1}>1.$ From (\ref{NTH}) it follows straightforwardly
that%
\begin{align*}
\ln N^{\left(  b-1\right)  N+2\rho}  &  =\frac{2\rho+\ln n}{\ln\left(  \left(
2\rho+\ln n\right)  /\left(  b-1\right)  \right)  }\ln\left(  \frac{\left(
2\rho+\ln n\right)  }{\left(  b-1\right)  \ln\left(  \left(  2\rho+\ln
n\right)  /\left(  b-1\right)  \right)  }-\frac{2\rho}{b-1}\right) \\
&  =\frac{2\rho+\ln n}{\ln\left(  \left(  2\rho+\ln n\right)  /\left(
b-1\right)  \right)  }\\
&  \times\ln\left[  \frac{\left(  2\rho+\ln n\right)  /\left(  b-1\right)
}{\ln\left(  \left(  2\rho+\ln n\right)  /\left(  b-1\right)  \right)
}\left(  1-2\rho\frac{\ln\left(  \left(  2\rho+\ln n\right)  /\left(
b-1\right)  \right)  }{2\rho+\ln n}\right)  \right]
\end{align*}%
\[
=\frac{2\rho+\ln n}{\ln\left(  \left(  2\rho+\ln n\right)  /\left(
b-1\right)  \right)  }\ln\frac{\left(  2\rho+\ln n\right)  /\left(
b-1\right)  }{\ln\left(  \left(  2\rho+\ln n\right)  /\left(  b-1\right)
\right)  }-2\rho+O\left(  \frac{\ln\ln n}{\ln n}\right)
\]%
\[
=\ln n-\left(  2\rho+\ln n\right)  \frac{\ln\ln\left(  \left(  2\rho+\ln
n\right)  /\left(  b-1\right)  \right)  }{\ln\left(  \left(  2\rho+\ln
n\right)  /\left(  b-1\right)  \right)  }+O\left(  \frac{\ln\ln n}{\ln
n}\right)  .
\]

On the other hand, from (\ref{NTH}),%
\[
\ln C_{1}^{N}\leq\frac{\left(  2\rho+\ln n\right)  \ln C_{1}}{\left(
b-1\right)  \ln\left(  \left(  2\rho+\ln n\right)  /\left(  b-1\right)
\right)  }%
\]
So, for $n\rightarrow\infty,$%
\begin{gather*}
\ln\left(  C_{1}^{N}N^{\left(  b-1\right)  N+2\rho}\right)  \leq\ln n+O\left(
\frac{\ln\ln n}{\ln n}\right) \\
-\,\underset{\rightarrow+\infty}{\underbrace{\left(  2\rho+\ln n\right)
\frac{\ln\ln\left(  \left(  2\rho+\ln n\right)  /\left(  b-1\right)  \right)
}{\ln\left(  \left(  2\rho+\ln n\right)  /\left(  b-1\right)  \right)  }}}\\
\times\left(  1-\underset{\rightarrow0}{\underbrace{\frac{\ln C_{1}}{\left(
b-1\right)  \ln\ln\left(  \left(  2\rho+\ln n\right)  /\left(  b-1\right)
\right)  }}}\right)  ,
\end{gather*}
hence%
\[
C_{1}^{N}N^{\left(  b-1\right)  N+2\rho}\leq2n
\]
We thus obtain by (\ref{var2}),
\[
\operatorname{Var}(p_{n,N}(x))\leq2N^{-2\rho}\lesssim\frac{\ln^{2\rho}\ln
n}{\ln^{2\rho}n}%
\]
while%
\[
\mathcal{R}_{N}^{2}(x)=O_{x}(N^{-2\rho})=O_{x}\left(  \frac{\ln^{2\rho}\ln
n}{\ln^{2\rho}n}\right)
\]
which gives (ii).

\bibliographystyle{plain}
\bibliography{est_subbm_bibliography}

\end{document}